\setlist[itemize]{topsep=0ex,itemsep=0ex,parsep=0ex}
\setlist[enumerate]{topsep=0ex,itemsep=0ex,parsep=0ex}
\crefname{lem}{Lemma}{Lemmas}
\crefname{thm}{Theorem}{Theorems}
\crefname{ques}{Question}{Theorems}
\crefname{cor}{Corollary}{Corollaries}
\crefname{enumi}{Item}{Items}
\newcommand{\defn}[1]{\textcolor{Maroon}{\emph{#1}}}
\renewcommand{\baselinestretch}{1.1}
\DeclarePairedDelimiter{\floor}{\lfloor}{\rfloor}
\DeclarePairedDelimiter{\ceil}{\lceil}{\rceil}
\DeclarePairedDelimiter{\abs}{\lvert}{\rvert}
\DeclarePairedDelimiter{\set}{\{}{\}}
\renewcommand{\epsilon}{\varepsilon}
\renewcommand{\emptyset}{\varnothing}
\renewcommand{\geq}{\geqslant}
\renewcommand{\ge}{\geqslant}
\renewcommand{\leq}{\leqslant}
\renewcommand{\le}{\leqslant}
\DeclareMathOperator{\dist}{dist}
\DeclareMathOperator{\tw}{tw}
\newcommand{\NN}{\mathbb{N}}
\newcommand*{\sse}{\subseteq}
\renewcommand{\thefootnote}{\fnsymbol{footnote}}
\theoremstyle{plain}
\newtheorem{thm}{Theorem}
\newtheorem{conj}[thm]{Conjecture}
\newtheorem*{conj*}{Conjecture}
\newtheorem{lem}[thm]{Lemma}
\newtheorem{prop}[thm]{Proposition}
\crefname{obs}{Observation}{Observations}
\newtheorem*{lem*}{Lemma}
\theoremstyle{definition}
\date{}
\begin{document}

\title{\bf\fontsize{18pt}{18pt}\selectfont Fat minors cannot be thinned\\ (by quasi-isometries)}

\author{James Davies\,\footnotemark[1] \quad
	Robert Hickingbotham\,\footnotemark[2]  \\
    Freddie Illingworth\,\footnotemark[3] \quad
    Rose McCarty\,\footnotemark[4] 
	}

\footnotetext[1]{Department of Pure Mathematics and Mathematical Statistics, University of Cambridge, Cambridge, UK (\textsf{\href{mailto:jgd37@cam.ac.uk}{jgd37@cam.ac.uk}}).}

\footnotetext[2]{School of Mathematics, Monash University, Melbourne, Australia (\textsf{\href{mailto:robert.hickingbotham@monash.edu}{robert.hickingbotham@monash.edu}}). Research supported by an Australia Mathematical Society Lift-Off Fellowship and the Monash Postgraduate Publication Award.}

\footnotetext[3]{Department of Mathematics, University College London, London, UK (\textsf{\href{mailto:f.illingworth@ucl.ac.uk}{f.illingworth@ucl.ac.uk}}). Research supported by EPSRC grant EP/V521917/1 and the Heilbronn Institute for Mathematical Research.}

\footnotetext[4]{School of Mathematics and School of Computer Science, Georgia Institute of Technology, Atlanta, USA (\textsf{\href{mailto:rmccarty3@gatech.edu}{rmccarty3@gatech.edu}}). Supported by the National Science Foundation under Grant No.\ DMS-2202961.}

\maketitle
\begin{abstract}
    We disprove the conjecture of Georgakopoulos and Papasoglu that a length space (or graph) with no $K$-fat $H$ minor is quasi-isometric to a graph with no $H$ minor. Our counterexample is furthermore not quasi-isometric to a graph with no 2-fat $H$ minor or a length space with no $H$ minor.
    On the other hand, we show that the following weakening holds: any graph with no $K$-fat $H$ minor is quasi-isometric to a graph with no $3$-fat $H$ minor.
\end{abstract}

\renewcommand{\thefootnote}{\arabic{footnote}}

\section{Introduction}
Inspired by connections to metric geometry, fat minors were independently introduced by Chepoi, Dragan, Newman, Rabinovich, and Vaxes~\cite{chepoi2012constant} and Bonamy, Bousquet, Esperet, Groenland, Liu, Pirot, and Scott~\cite{bonamy2023asymptotic} as a coarse variant of graph minors. 
They have been a key tool in resolving open problems at the intersection of structural graph theory and coarse geometry \cite{bonamy2023asymptotic}.
Georgakopoulos and Papasoglu~\cite{georgakopoulos2023graph} recently gave a systematic overview of this blossoming area which can be described as ``coarse graph theory''. 
At the heart of their paper, they proposed a natural conjecture that would effectively lift graph minor theory to the coarse setting: for every finite graph $H$, graphs with no fat $H$ minor are quasi-isometric to graphs with no $H$ minor. Quasi-isometry is a fundamental notion from geometric group theory which preserves the large-scale geometry of a metric space. So, if true, this conjecture would imply coarse analogues to various results from graph minor theory, such as the Graph Minor Structure Theorem of Robertson and Seymour~\cite{RobertsonSeymourStructThm}. 

Georgakopoulos and Papasoglu's conjecture has been settled for particular graphs $H$. Manning~\cite{manning2005geometry} characterised quasi-trees which implies the case $H = K_3$ (see~\cite{georgakopoulos2023graph}). The case $H = K_{2, 3}$ was proved by Chepoi, Dragan, Newman, Rabinovich, and Vaxes~\cite{chepoi2012constant}, and more generally, Fujiwara and Papasoglu~\cite{fujiwara2023coarse} recently characterised quasi-cacti. The case $H = K_{1, m}$ was recently settled by Georgakopoulos and Papasoglu~\cite{georgakopoulos2023graph}.

Our main contribution is a counterexample to the conjecture of Georgakopoulos and Papasoglu.

\begin{thm}\label{CounterExampleThm}
    There exists a finite graph $H$ such that, for every $q\in \NN$, there is a graph $G_q$ that does not contain $H$ as a $3$-fat minor and is not $q$-quasi-isometric to any graph with no \textup{(}$2$-fat\textup{)} $H$ minor.
\end{thm}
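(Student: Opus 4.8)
The plan is to construct, for one fixed graph $H$, a sequence $(G_q)_{q\in\NN}$ of graphs with $\mathrm{diam}(G_q)\to\infty$, each $G_q$ carrying a $2$-fat $H$ minor that is ``robust under $q$-quasi-isometry'' yet carrying no $3$-fat $H$ minor. Note that $G_q$ must itself have a $2$-fat $H$ minor, since otherwise it is $1$-quasi-isometric to itself, a graph with no $2$-fat $H$ minor; the content of the theorem is that this $2$-fat minor survives distortion by $q$.

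\textbf{Choice of $H$ and a building block.} First I would take $H$ to be a \emph{non-planar} graph, so that no coarsely planar or coarsely tree-like graph contains $H$ even as a (non-fat) minor, chosen moreover so that in the graphs we build every $H$-minor model is forced into a tightly packed, bounded-diameter shape. Alongside it I would design, for each $q$, a finite building block $W_q$ of diameter $\asymp q$ with three properties: (a) $W_q$ contains a $2$-fat $H$ minor; (b) $W_q$ contains no $3$-fat $H$ minor; and, crucially, (c) $W_q$ is \emph{robust} — every graph admitting a $(q,q)$-quasi-isometric embedding of $W_q$ contains a $2$-fat $H$ minor. The intended realisation of $W_q$ is a large planar ``reservoir'' (a grid or wall, which alone is $H$-minor-free) with a single non-planar ``defect'' implanted in it: the reservoir provides the metric room to spread an $H$-minor model out to fatness $2$ and to keep doing so under bounded distortion, while being shaped so that fatness $3$ is never available.

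\textbf{Assembling $G_q$ and the first half.} Next I would glue many copies of $W_q$ along a large-scale skeleton — a long path or a grid of blocks — so that $G_q$ has diameter $\gg q$ (ruling out the trivial obstruction that a bounded-diameter graph is quasi-isometric to a point) and so that each copy of $W_q$ sits isometrically inside $G_q$. The gluing should be along small, planar cuts, so that — $H$ being non-planar while the skeleton and inter-block regions are coarsely planar — every $H$-minor model of $G_q$ is confined to a single block and is hence at most $2$-fat. This proves the first half: $G_q$ has no $3$-fat $H$ minor.

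\textbf{The quasi-isometry half, and the main obstacle.} Suppose $G'$ has no $2$-fat $H$ minor and is $q$-quasi-isometric to $G_q$. Composing with a quasi-inverse and restricting to one isometric copy of $W_q$ gives a $(q,q)$-quasi-isometric embedding $W_q\hookrightarrow G'$; by robustness (c) the graph $G'$ has a $2$-fat $H$ minor, a contradiction. (As ``no $H$ minor'' implies ``no $2$-fat $H$ minor'', this also covers that case.) The heart of the matter — and the main obstacle — is property (c). A $q$-quasi-isometry may contract distances by a factor $q$ and identify points $q$ apart, so it can thin the particular $2$-fat model living in $W_q$ down to a $0$-fat one; pushing the model forward is therefore worthless, which is exactly why the companion ``thin to $3$-fat'' theorem does not improve to $2$. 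Instead one must show that \emph{every} faithful-enough image of $W_q$ supports a \emph{new} $2$-fat $H$-minor model constructed directly in the target: the non-planar defect must be anchored to the reservoir rigidly enough that its image remains ``non-contractibly'' placed after distortion, and the reservoir must be large enough (polynomially in $q$) that, after shrinking by $1/q$, there is still room to separate the branch sets and linking paths of the new model pairwise by distance $2$. Checking that this same construction never yields room for a $3$-fat model in $G_q$ as a whole — not just in an isolated block — is the other delicate point, and is where the tightness built into $H$ is used.
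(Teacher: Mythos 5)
Your outline correctly identifies the shape of what is needed (and the observation that $G_q$ must itself contain a $2$-fat $H$ minor is right), but the proof has a genuine gap: the entire burden is carried by the ``robustness'' property (c), which you assert as a design goal and never establish, and your intended realisation of $W_q$ does not plausibly satisfy (b) and (c) simultaneously. If the non-planar ``defect'' implanted in the planar reservoir has diameter bounded independently of $q$, then $W_q$ is quasi-isometric (with constants comparable to that diameter, hence at most $q$ for large reservoirs) to the plain reservoir, obtained by replacing the defect region with a planar patch; the reservoir has no $H$ minor at all, so (c) fails. If instead the defect is spread out at scale $\gtrsim q$ so that its non-planarity survives $q$-quasi-isometries, then it will typically already contain $H$ as a very fat minor (e.g.\ long subdivisions of a non-planar graph give fat models of it), so (b) fails. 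Resolving exactly this tension is the content of the theorem, and ``a grid with a rigidly anchored defect'' does not resolve it; nor does the gluing along planar cuts give you the claimed confinement of all $H$-models to single blocks without further argument.

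The paper's mechanism is quite different and is the missing idea. One takes $H$ to be two highly connected, high-treewidth pieces ($K_{15}$'s) joined by three edges, and $G_q$ to be subdivided copies of $K_{15}$ attached by three long paths to the terminals of the Nguyen--Scott--Seymour counterexample $N_q$ to the coarse Menger conjecture. The absence of a $3$-fat $H$ minor in $G_q$ is \emph{not} a minor-freeness/planarity phenomenon: treewidth and $4$-connectivity force the branch sets of the two cliques into the two subdivided $K_{15}$'s, so the three edges of $H$ would yield three $(S,T)$-paths in $N_q$, two of which must come within distance $2$ by the NSS property. Conversely, since $N_q$ has no small bottleneck separating $S$ from $T$, in \emph{any} graph $q$-quasi-isometric to $G_q$ one can build a fresh $2$-fat model directly in the target (exactly the ``new model in the image'' step you flag as the obstacle): neighbourhoods of the images of the subdivided cliques give the clique branch sets, and the binary tree plus bottom path of $N_q$ let one route two additional pairwise-far connections, at the cost of possibly swapping the endpoints $y_2,y_3$ --- which is why $H$ is chosen with an automorphism exchanging $x_2y_2,x_3y_3$ with $x_2y_3,x_3y_2$. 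Without an ingredient of this kind (a structure that defeats $3$-fatness through a coarse-Menger-type closeness of paths rather than through excluded minors, while admitting no coarse separator), your scheme cannot be completed as written.
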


In fact, \cref{CounterExampleThm} gives a strong counterexample to Georgakopoulos and Papasoglu's conjecture, since we can find $H$ as a 2-fat minor, rather than just as a minor.
One might hope that an analogue of the conjecture might still hold for edge weighted graphs or, more generally, length spaces, but this is not the case.

\begin{thm}\label{CounterExampleThmLength}
    There exists a finite graph $H$ such that, for every $q\in \NN$, there is a graph $G_q$ that does not contain $H$ as a $3$-fat minor and is not $q$-quasi-isometric to any length space with no $(2^{-13q^2})$-fat $H$ minor.
\end{thm}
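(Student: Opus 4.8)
The plan is to adapt the construction and proof of \cref{CounterExampleThm}. We keep the same graph $H$ and take $G_q$ to be the graph produced in that proof, with its internal parameters chosen as a function of $q$; as there, this $G_q$ has no $3$-fat $H$ minor. It then suffices to prove (\cref{CounterExampleThmLength}) that every length space $X$ that is $q$-quasi-isometric to $G_q$ contains a $(2^{-13q^2})$-fat $H$ minor; equivalently, no length space with no $(2^{-13q^2})$-fat $H$ minor is $q$-quasi-isometric to $G_q$.

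It is tempting to deduce this from \cref{CounterExampleThm} as a black box, by discretising $X$ to a $\delta$-net graph $G'$ --- which is $O(q/\delta)$-quasi-isometric to $G_q$, and hence, for an appropriate choice of $G_q$, contains a $2$-fat $H$ minor --- and then pulling that minor back into $X$. This approach fails, for two independent reasons. First, moving a fat minor out of a net graph and into the length space $X$ requires joining nearby net-points by short geodesics in order to reconnect the branch sets and routes, and this inflation shrinks the fatness by an additive $\Theta(\delta)$; a $2$-fat $H$ minor of $G'$ therefore degrades to a configuration in $X$ of fatness $(2 - \Theta(1))\delta$, which need not be a fat minor at all, and no choice of $\delta$ repairs this. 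Second, one cannot strengthen the graph statement so that $G'$ acquires, say, an $11$-fat $H$ minor, because $G_q$ is $q$-quasi-isometric to itself and has no $3$-fat $H$ minor; and the gap between the fatness that is available and the one that is needed cannot be closed by quasi-isometry-invariant reasoning. So the length-space statement genuinely requires reopening the proof of \cref{CounterExampleThm}, not merely its conclusion.

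I would therefore re-run the core of the proof of \cref{CounterExampleThm} directly in the category of length spaces: given a length space $X$ that is $q$-quasi-isometric to $G_q$, locate a fat $H$ minor in $X$ by the same mechanism as in the graph case, but carry the fatness of the partial minor along as an explicit real parameter and arrange that every step reduces it only \emph{multiplicatively}, never by an additive constant. Since the construction of $G_q$ is multi-scale and the argument works through its scales one at a time, a $q$-quasi-isometry contributes a bounded multiplicative distortion to the fatness at each scale; in the graph case these losses are absorbed because distances are integral, but in a general length space they compound, and over all scales of the construction the total multiplicative loss is $2^{\Theta(q^2)}$ --- which, with the constants unwound and nothing optimised, yields the stated bound $2^{-13q^2}$. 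The main obstacle is exactly this bookkeeping: verifying that each step of the proof of \cref{CounterExampleThm} costs only a controlled multiplicative factor in the fatness, and that these factors multiply to at most $2^{13q^2}$. A minor technical wrinkle is that a length space need not be geodesic, so throughout one uses paths realising distances up to an arbitrarily small slack, which affects only the constants.
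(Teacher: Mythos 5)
Your setup matches the paper: $H$ and $G_q$ are unchanged, \cref{MainNoFatMinors} still gives the $3$-fat exclusion, and the whole content is showing that any length space $q$-quasi-isometric to $G_q$ contains a $(2^{-13q^2})$-fat $H$ minor (this is exactly \cref{LemmaFindingFat}, which the paper proves once for graphs and length spaces together). Your diagnosis that a black-box discretisation cannot work is also sound. But the plan for the length-space argument has a genuine gap: the claim that ``every step reduces the fatness only multiplicatively'' is false at the one step where the graph and length-space cases actually diverge, and your explanation of where $2^{-13q^2}$ comes from (multiplicative losses compounding over the scales of a multi-scale construction) is not how the bound arises and does not suggest a working argument. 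In the proof of \cref{LemmaFindingFat}, essentially every separation estimate comfortably yields fatness at least $2$; the single problematic step is separating the $2^{13q^2}$ path-components of $B_q^\ast$ (the part of $\widehat{G}$ sitting over the binary tree, minus a neighbourhood of the spine path $Z_1$) that contain the images of the leaves. In a graph, two distinct components of an induced subgraph are automatically at distance at least $2$; in a general length space, distinct path-components can be at distance zero. There is no positive quantity here to lose a multiplicative factor from, so carrying fatness as a real parameter through the same steps simply produces $0$ at this point.

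The missing idea is the paper's merging lemma (\cref{closesets}): given $n$ path-connected sets and any $\epsilon>0$, one can merge them into $m\le n$ path-connected sets that are pairwise at distance at least $\epsilon$, at the cost of fattening each set by at most $(n-m)\epsilon$. With $n=2^{13q^2}$ leaf-components, choosing $\epsilon=2^{-13q^2}$ keeps the total additive fattening at most $1$, so the merged sets still avoid the forbidden neighbourhoods of $Z_1$ and of the bottom-path segments; this single trade-off --- separation $1/n$ versus additive growth $n\cdot(1/n)=1$ --- is the sole source of the $2^{-13q^2}$ in the statement (the exponent records the number of leaves, not a product of per-scale losses). One then also needs the bookkeeping with $e(i)$, $a_i$ and the parity argument, using the isomorphism $H\cong (H\setminus\{x_2y_2,x_3y_3\})\cup\{x_2y_3,x_3y_2\}$, because after merging one no longer controls which leaves lie in which set and the two zig-zag paths $Z_2,Z_3$ may come out swapped. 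Without the merging device, or something equivalent to it, your re-run of the graph proof does not go through.
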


It turns out that \cref{CounterExampleThm} is essentially tight;
we prove that graphs with no $K$-fat $H$ minor are $K$-quasi-isometric to graphs with no $3$-fat $H$ minor.
By scaling, 
\cref{CounterExampleThmLength} is also similarly tight up to some positive factor depending on $q$ (see \cref{scale}).

\begin{thm}\label{weakfavortie2} 
    Let $K$ be a positive integer, $\mathcal{H}$ a non-empty collection of graphs, and $G$ a graph that has no $K$-fat $H$ minor for every $H \in \mathcal{H}$. Then $G$ is $K$-quasi-isometric to a graph $\widehat{G}$ that does not contain a $3$-fat $H$ minor for each $H \in \mathcal{H}$.
\end{thm}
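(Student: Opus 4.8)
My plan is to construct $\widehat{G}$ from $G$ by a careful ``blow-up and subdivide'' operation that strictly enlarges the distance scale by a constant factor, so that what used to be a $K$-fat $H$ minor in a quasi-isometric target must have been (after pulling back) a $3$-fat $H$ minor in $G$. More concretely, I would take $\widehat{G}$ to be obtained from $G$ by replacing each vertex with a clique (or a small gadget of bounded diameter) and subdividing each edge into a path of length roughly $K$; call the natural map $\phi \colon V(\widehat{G}) \to V(G)$. One checks directly that $\phi$ realises a $K$-quasi-isometry: distances in $\widehat{G}$ are within an additive and multiplicative constant bounded by $K$ of $K$ times the corresponding distances in $G$ (the gadgets contribute only a bounded additive error). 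This is the easy, bookkeeping half of the argument.

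The substance is showing $\widehat{G}$ has no $3$-fat $H$ minor whenever $G$ has no $K$-fat $H$ minor, for every $H \in \mathcal{H}$. Suppose for contradiction that $\widehat{G}$ contains a $3$-fat model of some $H \in \mathcal{H}$, i.e.\ connected branch sets $\{B_v : v \in V(H)\}$ and connecting paths $\{P_{uv} : uv \in E(H)\}$ that are pairwise at distance $\ge 3$ except where they are forced to meet. I would push this model down to $G$ via $\phi$: set $B_v' = \phi(B_v)$ and $P_{uv}' = \phi(P_{uv})$. Since $\phi$ is $K$-Lipschitz in the sense that adjacent or equal vertices of $\widehat{G}$ map to vertices at distance $\le 1$ in $G$ (consecutive subdivision vertices on an edge-path map to the two endpoints or a single endpoint), connectivity is preserved, so this is again an $H$-model in $G$. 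For fatness: if two objects of the $\widehat{G}$-model are at $\widehat{G}$-distance $\ge 3$, then because subdividing multiplies edge-lengths by $\approx K$ while the quasi-isometry constants are controlled by $K$, their images are at $G$-distance $\ge K$ — here the point is that in $\widehat{G}$ two vertices mapping to $G$-vertices at distance $< K$ are themselves at distance $< 3$ once the subdivision length is chosen appropriately (roughly, one unit of $G$-distance costs one long subdivided path, i.e.\ $\ge K \ge 3$ in $\widehat{G}$, so sub-$K$ separation downstairs lifts to sub-$3$ separation upstairs). Contrapositively, $\ge 3$ upstairs forces $\ge K$ downstairs, giving a $K$-fat $H$ minor in $G$, a contradiction.

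I expect the main obstacle to be getting the gadget/subdivision parameters to interact cleanly with \emph{all three} of the conflicting requirements simultaneously: (i) the map must be a genuine $K$-quasi-isometry (not merely $O(K)$), so the additive gadget error and the multiplicative subdivision factor must both be absorbed into the single constant $K$; (ii) fatness must transfer in the right direction, which pins down the subdivision length from below; and (iii) one must handle the degenerate cases in the definition of fat minor — branch sets or paths that share vertices, or an $H$ with loops/multi-edges — so that ``distance $\ge 3$'' is the correct threshold rather than $\ge 2$ (this is presumably why the theorem lands on $3$ and not $2$, matching the tightness in \cref{CounterExampleThm}). A secondary technical point is that the argument should be uniform over the whole family $\mathcal{H}$, but since the construction of $\widehat{G}$ does not depend on $H$ at all, this uniformity is automatic once the single-$H$ case is done. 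I would organise the write-up as: (1) define $\widehat{G}$ and $\phi$; (2) verify the quasi-isometry bound; (3) the push-down lemma for models, with the fatness inequality isolated as the key computation; (4) assemble the contradiction.
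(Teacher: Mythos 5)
Your construction scales distances in the wrong direction, and this is fatal rather than a matter of tuning parameters. Subdividing each edge of $G$ into a path of length about $K$ makes $\widehat{G}$-distances roughly $K$ times the corresponding $G$-distances. Consequently a pair of branch sets at $\widehat{G}$-distance $\ge 3$ pushes down to a pair at $G$-distance only about $3/K$, not $\ge K$; your claimed key inequality (``sub-$K$ separation downstairs lifts to sub-$3$ separation upstairs'') is exactly backwards for this $\widehat{G}$, since one unit of $G$-distance costs about $K$ units upstairs, so $G$-distance $K-1$ lifts to $\widehat{G}$-distance about $K(K-1)$, which is far more than $3$. A concrete counterexample to your step (3): take $G$ to be any graph of bounded diameter containing $H$ as an ordinary minor (e.g.\ $G = H$ itself, for an $H$ with two non-adjacent vertices) and $K$ large. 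Then $G$ has no $K$-fat $H$ minor, but its $K$-subdivision inflates the ordinary minor model into a $3$-fat (indeed essentially $K$-fat) one, so your $\widehat{G}$ fails the conclusion. In short, subdivision is the engine for \emph{producing} fat minors, not for destroying them.

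The correct move is the opposite one: \emph{contract} distances by passing to the power graph $\widehat{G} = G^K$, in which every pair of vertices at $G$-distance at most $K$ is joined by an edge. The identity map is then a $K$-quasi-isometry ($K^{-1}\dist_G \le \dist_{G^K} \le \dist_G$), and now the inequality you wanted is true: $\dist_{G^K}(X,Y) \ge 3$ forces $\dist_G(X,Y) \ge 2K+1$. There is one extra wrinkle your push-down step would then need to handle: a branch set that is connected in $G^K$ need not be connected in $G$ (an edge of $G^K$ may correspond to a $G$-path of length up to $K$), so before reading the model off in $G$ one must fatten each branch set and connecting path by its $\lfloor K/2 \rfloor$-neighbourhood in $G$; this restores connectivity while reducing pairwise distances by at most $2\lfloor K/2\rfloor \le K$, leaving them $\ge K+1$ and yielding the desired $K$-fat $H$ minor in $G$, a contradiction. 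Your structural outline (define $\widehat{G}$, verify the quasi-isometry, prove a push-down lemma, conclude) is the right shape; only the construction itself needs to be inverted.
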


 Our proof of \cref{CounterExampleThm} is based on a recent construction of Nguyen, Scott, and Seymour~\cite{nguyen2024counterexample} which was used to disprove a conjectured coarse version of Menger's theorem. The connection between Menger's theorem and \cref{CounterExampleThm} intuitively comes from the fact that Menger's theorem can be interpreted as a statement about rooted minors.

\section{Preliminaries}

A \defn{length space} is a metric space $(M, d)$ such that, for every $x, y \in M$ and $\epsilon > 0$, there is an $(x, y)$-arc of length at most $d(x, y) + \epsilon$ whenever $d(x, y)$ is finite.
In particular, every graph (where the edges are arcs of length one) is a length space (in which one can even take $\epsilon = 0$). Graphs with arbitrary non-negative real-valued edge-lengths also induce a length space. See \cite{BHMetricSpaces} for further background on length spaces.

For point sets $X, Y$ in a length space $G$, an \defn{$(X, Y)$-path} is a path in $G$ with one end-point in $X$ and the other in $Y$. The \defn{distance} between $X$ and $Y$, denoted \defn{$\dist_G(X, Y)$} is the infimum of the lengths of the $(X, Y)$-paths in $G$. Note that this corresponds to the usual notion of distance for graphs.

We say that a set of points $Y$ in a length space $G$ is \defn{path-connected} if, for every $x,y\in Y$, there is an $(x, y)$-path contained in $Y$.
Note that this corresponds to the usual notion of connectivity for graphs.
For a length space $G$, a non-negative real $r$, and a set of points $Y$, we let $N_G^r[Y]$ denote the set of points $a$ such that there is an $(a, Y)$-path in $G$ of length at most $r$. In graphs this is just the set of points at distance at most $r$ from $Y$.
Note that for any point $y \in G$, $N_G^r[\set{y}]$ is always path-connected.

For a positive integer $K$, a \defn{$K$-fat minor model} of a graph $H$ in a length space $G$ is a collection $(B_v \colon v \in V(H)) \cup (P_e \colon e\in E(H))$ of path-connected sets in $G$ such that
\begin{itemize}
    \item $V(B_v)\cap V(P_e)\neq \emptyset$ whenever $v$ is an end of $e$ in $H$; and
    \item for any pair of distinct $X, Y \in \set{B_v\colon v\in V(H)}\cup \set{P_e \colon e\in E(H)}$ not covered by the above condition, we have $\dist_G(X,Y) \geq K$.
\end{itemize}
If $G$ contains an $K$-fat minor model of $H$, then we say that $H$ is a \defn{$K$-fat minor} of $G$.

For $q \in \NN$, a \defn{$q$-quasi-isometry} of a length space $G$ into a length space $\widehat{G}$ is a map $\phi \colon X \to \widehat{X}$ such that, for every $x,y\in X$,
\begin{equation*}
    q^{-1} \cdot \dist_X(x,y) - q \leq \dist_{\widehat{X}}(\phi(x), \phi(y))\leq q \cdot \dist_X(x,y) + q,
\end{equation*}
and, for every $\widehat{v} \in \widehat{X}$, there is a $v \in X$ with $  \dist_{\widehat{X}}(\widehat{v},\phi(v)) \leq q$. If $\widehat{X}$ is a graph, then we restrict to quasi-isometries $\phi$ with $\phi \colon X \to V(\widehat{X})$.

Let $G$ be a graph and $Z\subseteq V(G)$. We write $G[Z]$ for the subgraph of $G$ induced on $Z$. We refer to $Z$ and $G[Z]$ interchangeably, whenever there is no chance of confusion. A graph $J$ is a \defn{subdivision} of a $G$ if $J$ can be obtained from $G$ by replacing each edge $uv$ of $G$ by a path with end-vertices $u$ and $v$. If each path has exactly $k$ internal vertices, then we say that $J$ is the \defn{$k$-subdivision} of $G$.

For a graph $G$, a \defn{tree-decomposition} of $G$ is a collection of \defn{bags} $(B_x \sse V(G) \colon x \in V(T))$ indexed by a tree $T$ such that for each $v \in V(G)$, $T[\set{x \in V(T) \colon v \in B_x}]$ is a non-empty subtree of $T$; and for each $uv \in E(G)$, there is a node $x \in V(T)$ such that $u,v \in B_x$. The \defn{width} of $(B_x \sse V(G) \colon x \in V(T))$ is $\max\set{\abs{B_x} - 1 \colon x \in V(T)}$. The \defn{treewidth $\tw(G)$} of a graph $G$ is the minimum width of a tree-decomposition of $G$. Treewidth is a fundamental parameter in algorithmic and structural graph theory and is the standard measure of how similar a graph is to a tree.

\section{Removing coarseness counterexample}

\subsection{The construction}

We start with Nguyen, Scott, and Seymour's counterexample to a conjectured coarse version of Menger's theorem~\cite{nguyen2024counterexample}. For a positive integer $q$, they defined the graph $N_q$ shown in \cref{fig:NSS} where the complete binary tree 
has depth $13q^2$ with each solid line being an edge and each dotted line represents a path of length $14q^2$.

\begin{figure}[H]
    \centering
    \includegraphics[width=\textwidth]{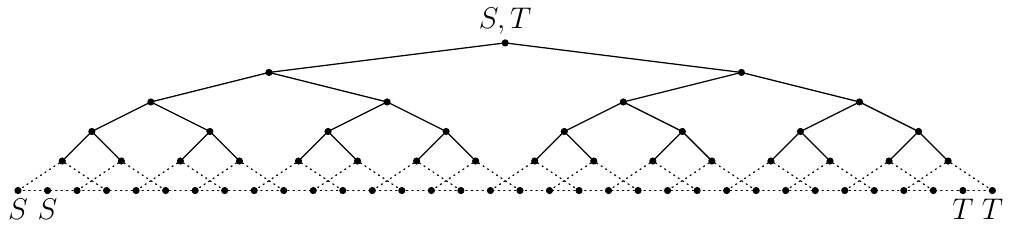}
    \vspace{-24pt}
    \caption{The graph $N_q$; the dotted lines denote paths of length $14q^2$.}
    \label{fig:NSS}
\end{figure}

They proved that $N_q$ has the following property.

\begin{lem}[\cite{nguyen2024counterexample}]\label{NSSproperties}
    Let $S$ and $T$ be the sets of the three vertices in $N_q$ as shown in the figure,
    and let $P_1$, $P_2$, $P_3$ be three $(S,T)$-paths in $N_q$. Then there exist distinct $i, j \in \set{1, 2, 3}$ such that $\dist(P_i,P_j) \leq 2$.
\end{lem}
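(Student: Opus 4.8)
The plan is to argue by contradiction. Suppose $P_1,P_2,P_3$ are $(S,T)$-paths in $N_q$ with $\dist(P_i,P_j)\geq 3$ for all distinct $i,j$. Then the three paths are pairwise vertex-disjoint and, more strongly, no vertex of one has a neighbour on another. Since two $(S,T)$-paths that share an endpoint in $S$ or in $T$ would be at distance $0$, we may assume that $P_1,P_2,P_3$ collectively use all three vertices of $S$ and all three vertices of $T$ as their endpoints. The aim is to show that the shape of $N_q$ --- a complete binary tree $\mathcal T$ of depth $13q^2$ with the dotted paths of length $14q^2$ attached as in \cref{fig:NSS} --- simply leaves no room for three such paths.

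The core of the argument is to follow each $P_i$ down through the levels of $\mathcal T$. For each level $\ell\in\{0,1,\dots,13q^2\}$ one records the sequence of level-$\ell$ subtrees of $\mathcal T$ that $P_i$ enters, and the times at which $P_i$ leaves $\mathcal T$ to traverse a dotted path. The positions of $S$ and $T$ force every $(S,T)$-path to perform prescribed descents and ascents in $\mathcal T$; combining this bookkeeping with the observation that three pairwise-disjoint paths inherit a nested structure from a plane drawing of $N_q$ should force two of them --- say $P_1$ and $P_2$ --- to sit simultaneously inside a common level-$\ell$ subtree at a moment when that subtree is already ``thin'', that is, a single descending branch or a small gadget of diameter at most $2$. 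Two disjoint paths that both pass through such a thin piece must contain vertices within distance $2$ of each other, contradicting $\dist(P_1,P_2)\geq 3$. Note that the constant $2$ here is precisely the diameter of the thin local pieces that occur and does not grow with $q$; the parameter $q$ enters only to make $\mathcal T$ deep enough (and the dotted paths long enough) that this forcing cannot be avoided.

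The main obstacle is the dotted paths: a priori, a path $P_i$ could escape the forcing by repeatedly diverting along them, since these detours are ``wide'' and lie outside $\mathcal T$. This is exactly where the calibration of the depth $13q^2$ against the dotted-path length $14q^2$ is used --- a single dotted detour already adds more than the full depth of $\mathcal T$ to the length of $P_i$, so an $(S,T)$-path can afford only a bounded number (in fact $O(q)$) of such detours while still realising all the descents and ascents its endpoints demand. Bounding the number of detours in this way is what lets the level-by-level pigeonhole run. A secondary, more clerical, point is that the argument is sensitive to which six vertices make up $S\cup T$ in \cref{fig:NSS}, so one sets up the descent/ascent bookkeeping with those vertices explicitly in hand, together with a short case analysis of how each $P_i$ can first leave a neighbourhood of $S$.
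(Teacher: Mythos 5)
There is a genuine gap here, and in fact the paper does not prove this lemma at all: it is quoted verbatim from Nguyen, Scott, and Seymour, where it is established by a delicate induction on the height of the binary tree that exploits the self-similar structure of $N_q$ (the two principal subtrees of the root, together with their pendant dotted paths and the corresponding halves of the bottom path, are copies of the same construction one level down). Your sketch does not reproduce that argument, and the strategy you propose in its place has a step that fails. The central problem is the claim that each dotted detour "adds more than the full depth of $\mathcal T$ to the length of $P_i$, so an $(S,T)$-path can afford only a bounded number (in fact $O(q)$) of such detours.'' The $P_i$ are arbitrary $(S,T)$-paths, not geodesics, so they have no length budget whatsoever: a single path may traverse exponentially many (in $q^2$) of the dotted paths, weaving between the tree and the bottom path as often as it likes, subject only to not repeating vertices. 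The calibration of $13q^2$ against $14q^2$ in the construction is used elsewhere in the paper for metric estimates under quasi-isometry, not to limit combinatorial detours. Once the bound on detours disappears, the "level-by-level pigeonhole'' has nothing to run on.

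Beyond that, the conclusion of the argument is asserted rather than proved: you never specify what the "thin pieces of diameter at most $2$'' are, nor why two of the three paths must simultaneously occupy one. This is the entire content of the lemma, and it is exactly where the number three must enter (two $(S,T)$-paths at large pairwise distance \emph{do} exist in $N_q$, so any correct proof must use the third path essentially; a generic nesting/pigeonhole argument that would equally apply to two paths cannot be right). The appeal to a "nested structure from a plane drawing'' is also not enough on its own: $N_q$ is planar and disjointness of the paths does constrain their drawing, but the endpoints in $S\cup T$ are not arranged so that planarity alone forces two paths to come within distance $2$; one really needs the recursive tree structure. If you want a complete proof, I would start from the NSS induction: state the inductive hypothesis for the height-$n$ gadget with its designated terminals, and show how three far-apart paths in the height-$(n+1)$ gadget would yield three far-apart paths in one of the two height-$n$ copies.
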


We prove an additional property of $N_q$.

\begin{lem}\label{TreewidthNSS}
    $N_q$ has treewidth at most seven.
\end{lem}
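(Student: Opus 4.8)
The plan is to exhibit an explicit tree-decomposition of $N_q$ of width at most seven by following the recursive structure of the complete binary tree on which $N_q$ is built. Recall that $N_q$ consists of a complete binary tree of depth $13q^2$ — with each tree-edge replaced by a path of length $14q^2$ — together with two extra ``apex-like'' vertices (or small vertex sets) near the root, plus the boundary sets $S$ and $T$; the key point is that this underlying structure is itself essentially tree-like, so its treewidth should be a small absolute constant independent of $q$.

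The key steps, in order, are as follows. First I would fix notation for the vertices of $N_q$ that are not on the binary tree: inspecting \cref{fig:NSS}, there is a bounded number (a small constant, at most three or four) of such ``global'' vertices — these are the vertices through which $S$ and $T$ attach. Put every one of these global vertices into \emph{every} bag of the decomposition; this contributes a constant additive term to all bag sizes. Second, build the skeleton of the decomposition from the binary tree itself: for each branch vertex $x$ of the binary tree take a bag containing $x$ together with the (at most two) branch vertices adjacent to it in the tree, i.e. $x$ and its parent; this already gives a tree-decomposition of the branch vertices of width at most two. Third, handle the subdivided edges: a path of length $14q^2$ joining branch vertices $x$ and $y$ is accommodated by a path of bags, each containing $x$, $y$, and the at most two consecutive path-vertices currently being covered — the standard tree-decomposition of a path ``relative to its two endpoints'', of width three. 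Splice this path of bags into the skeleton between the bag for $x$ and the bag for $y$. Finally, verify the two tree-decomposition axioms (each vertex induces a subtree; each edge lies in some bag) — these are immediate from the construction — and add up: the branch-vertex bags have size at most $3$, the subdivision-path bags have size at most $4$, and adding the at most four global vertices to every bag yields bag size at most $8$, i.e. width at most $7$.

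The step requiring the most care is the bookkeeping around the root and the sets $S$ and $T$: one must read off from \cref{fig:NSS} exactly how many vertices outside the binary tree there are and exactly which branch vertices they are adjacent to, so as to confirm that putting them in every bag never pushes a bag past eight vertices, and that the edges incident to $S$ and $T$ (and any edges among the global vertices themselves) are indeed covered. If the figure has, say, only two or three such vertices rather than four, the bound is comfortably met; the claimed constant seven is chosen to leave a little slack for exactly this reason. Everything else — the decomposition of a path relative to its endpoints, and the decomposition of a tree by adjacent-vertex bags — is entirely routine, so no genuine obstacle is expected; the only real work is translating the picture into a precise vertex set and checking the arithmetic of bag sizes.
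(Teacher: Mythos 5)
Your proposal founders on a misreading of the structure of $N_q$. You describe it as a subdivided complete binary tree plus ``a bounded number of global vertices'' that can be placed in every bag. But $N_q$ also contains an entire path running along the bottom of \cref{fig:NSS}, with $2^{13q^2+1}+2$ thick vertices $p_1,\dots,p_{2^{13q^2+1}+2}$, joined to the leaves of the binary tree by further subdivided edges; the sets $S$ and $T$ sit at the two ends of this path. This bottom path is not a constant-size set of apexes --- it has exponentially many vertices --- so it cannot be added to every bag, and your skeleton (tree bags plus subdivision-path bags plus constantly many apexes) leaves all of its edges, and all of the leaf-to-path connections, uncovered. Indeed, if $N_q$ really were a subdivided tree plus $O(1)$ apexes, its treewidth would trivially be $1$ plus a constant and the lemma would hardly need proof; the bound of seven exists precisely because of the bottom path threading through the leaves.

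The paper's proof handles this by first suppressing subdivision vertices (treewidth is invariant under subdivision), then contracting alternate edges of the bottom path so that the result is a minor of a plane binary tree with a path through its leaves; that graph is a subgraph of a Halin graph and hence has treewidth at most $3$, and undoing the contractions at worst doubles each bag, giving $2\cdot(3+1)-1=7$. To repair your argument you would need, at a minimum, to incorporate the bottom path into your decomposition --- for instance by reproducing the width-$3$ decomposition of a tree-plus-leaf-path graph along the planar ordering of the leaves --- at which point you have essentially reconstructed the paper's proof rather than the routine tree-plus-apex argument you describe.
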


\begin{proof}
    Subdividing an edge does not change the treewidth of a graph and so $N_q$ has the same treewidth as the graph $N'_q$ where each dotted segment has been replaced by an edge. Contract the following edges of the path at the bottom of $N'_q$: the edges joining the 1st and 2nd vertices, 3rd and 4th vertices, the 5th and 6th vertices, \ldots. The resulting graph $N''_q$ is a minor of the graph depicted in \cref{fig:Halin}.

    \begin{figure}[H]
        \centering
        \includegraphics[width=\textwidth]{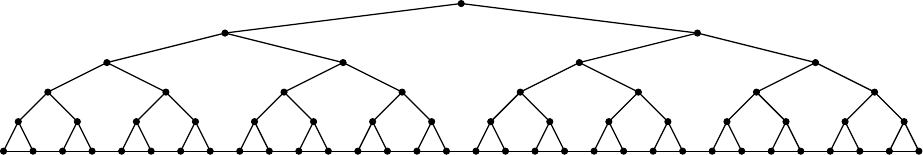}
        \vspace{-24pt}
        \caption{Plane binary tree with path through its leaves.}
        \label{fig:Halin}
    \end{figure}
    
    The graph in \cref{fig:Halin} is a subgraph of a Halin graph\footnote{A \defn{Halin graph} is a planar graph that can be constructed from a planar embedding of a tree by connecting its leaves with a cycle which passes around the tree's boundary.} and so has treewidth at most three~\cite{Bodlaender88}. Thus $\tw(N''_q) \leq 3$. When contracting the edges to get from $N'_q$ to $N''_q$, the bags drop in size by a factor of at most two and so $\tw(N_q) = \tw(N'_q) \leq 7$, as required.
\end{proof}

Now we turn to the construction of $H$ and $G_q$ in \cref{CounterExampleThm}.

Let $L$ and $R$ be vertex-disjoint copies of $K_{15}$ with vertex-sets $\set{x_1, \dotsc, x_{15}}$ and $\set{y_1 \dotsc, y_{15}}$, respectively. Let $H$ be the graph obtained from the union of $L$ and $R$ by adding the edges $x_1 y_1$, $x_2 y_2$, $x_3 y_3$ as shown in \cref{fig:H}. 
We will use the following properties of $H$:
\begin{enumerate}[label=(\roman*)]
    \item $L$ and $R$ are both $4$-connected graphs with treewidth $14$; and
    \item $H$ and $(H \backslash \set{x_2 y_2, x_3 y_3}) \cup \set{x_2 y_3, x_3 y_2}$ are isomorphic.
\end{enumerate}

\begin{figure}[H]
	\centering
	\begin{tikzpicture}
		\tkzDefPoint(-1.8, 0){O1}
		\tkzDefPoint(-0.6, 0){P1}
		\tkzDefPoint(1.8, 0){O2}
		\tkzDefPoint(0.6, 0){P2}
		\tkzDefPoint(-1, -0.5){L3}
		\tkzDefPoint(-1, 0){L2}
		\tkzDefPoint(-1, 0.5){L1}
		\tkzDefPoint(1, -0.5){R3}
		\tkzDefPoint(1, 0){R2}
		\tkzDefPoint(1, 0.5){R1}
		
		\tkzDrawCircle(O1, P1)
		\tkzDrawCircle(O2, P2)
		\tkzDrawSegments(L1,R1 L2,R2 L3,R3)
		\tkzDrawPoints(L1,L2,L3,R1,R2,R3)
		\tkzLabelPoint[left=-4pt](O1){\large$L$}
		\tkzLabelPoint[right=-4pt](O2){\large$R$}
        \tkzLabelPoint[left](L1){$x_1$}
        \tkzLabelPoint[left](L2){$x_2$}
        \tkzLabelPoint[left](L3){$x_3$}
        \tkzLabelPoint[right](R1){$y_1$}
        \tkzLabelPoint[right](R2){$y_2$}
        \tkzLabelPoint[right](R3){$y_3$}
	\end{tikzpicture}
    \caption{The graph $H$; both $L$ and $R$ are $4$-connected with treewidth 14.}
	\label{fig:H}
\end{figure}
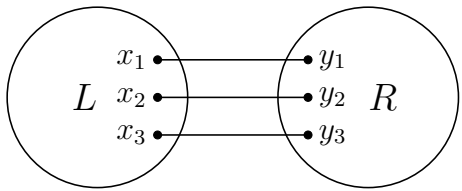

The graph $G_q$ is obtained as follows and shown in \cref{fig:G}.
Let $L^{\star}$ be a $16q^2$-subdivision of $K_{15}$ and let $R^\star$ be a $16q^2$-subdivision of $K_{15}$. We let $x_1^\star, \dotsc, x_{15}^\star$ and $y_1^\star, \dotsc, y_{15}^\star$ denote the high degree vertices of $L^\star$ and $R^{\star}$, respectively. 
Take disjoint copies of each of $L^\star$, $R^\star$, and $N_q$. 
Label the $S$ vertices of $N_q$ as $S_1$, $S_2$, $S_3$ and the $T$ vertices of $N_q$ as $T_1$, $T_2$, $T_3$.
For each $i \in \set{1, 2, 3}$ add a path of length $16q^2$ from $x_i^\star$ to $S_i$ and add a path of length $16q^2$ from $y_i^\star$ to $T_i$.

\begin{figure}[H]
	\centering
	\begin{tikzpicture}
		\foreach \pt in {0,1,2}{
			\tkzDefPoint(\pt*120 + 90:1.5){A\pt}
			\tkzDrawPoint(A\pt)
		}

		\tkzDrawPolygon(A0,A1,A2)

		\tkzDefPoint(-3.6, 0){O1}
		\tkzDefPoint(-2.4, 0){P1}
		\tkzDefPoint(3.6, 0){O2}
		\tkzDefPoint(2.4, 0){P2}
		\tkzDefPoint(-2.8, -0.5){L3}
		\tkzDefPoint(-2.8, 0){L2}
		\tkzDefPoint(-2.8, 0.5){L1}
		\tkzDefPoint(2.8, -0.5){R3}
		\tkzDefPoint(2.8, 0){R2}
		\tkzDefPoint(2.8, 0.5){R1}

		\tkzDefMidPoint(L1,A0) \tkzGetPoint{s1}
		\tkzDefMidPoint(R1,A0) \tkzGetPoint{t1}
		\tkzDefMidPoint(L2,A1) \tkzGetPoint{s2}
		\tkzDefMidPoint(R2,A2) \tkzGetPoint{t2}

		\tkzDefShiftPoint[s2](-80:1){s3}
		\tkzDefShiftPoint[t2](-100:1){t3}

		\tkzDefBarycentricPoint(A1=4,A2=1) \tkzGetPoint{B1}
		\tkzDefBarycentricPoint(A1=1,A2=4) \tkzGetPoint{B2}

		\tkzDrawSegments[dotted](L1,A0 R1,A0 L2,A1 R2,A2 L3,s3 s3,B1 R3,t3 t3,B2)

		\tkzDrawCircle(O1, P1)
		\tkzDrawCircle(O2, P2)
		\tkzDrawPoints(L1,L2,L3,R1,R2,R3)
		\tkzDrawPoints(B1,B2)

        \tkzLabelPoint[above](A0){$S_1, T_1$}
        \tkzLabelPoint[shift={(-0.2,0.75)}](A1){$S_2$}
        \tkzLabelPoint[below](B1){$S_3$}
        \tkzLabelPoint[shift={(0.2,0.75)}](A2){$T_2$}
        \tkzLabelPoint[below](B2){$T_3$}

        \tkzLabelPoint[left](L1){$x_1^\star$}
        \tkzLabelPoint[left](L2){$x_2^\star$}
        \tkzLabelPoint[left](L3){$x_3^\star$}
        \tkzLabelPoint[right](R1){$y_1^\star$}
        \tkzLabelPoint[right](R2){$y_2^\star$}
        \tkzLabelPoint[right](R3){$y_3^\star$}

		\tkzLabelPoint[left=-12pt](0,0){\large$N_q$}
		\tkzLabelPoint[left=-10pt](O1){\large$L^\star$}
		\tkzLabelPoint[right=-10pt](O2){\large$R^\star$}
	\end{tikzpicture}
    \caption{The graph $G_q$.}
	\label{fig:G}
\end{figure}
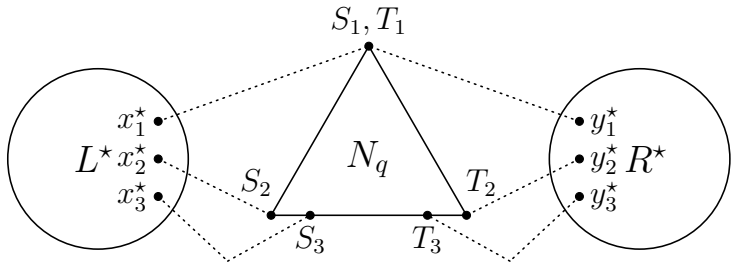

\subsection{Avoiding fat minors}

We now show that $G_q$ does not contain $H$ as a 3-fat minor.

\begin{lem}\label{MainNoFatMinors}
    For every $q\in \NN$, $G_q$ does not contain $H$ as a $3$-fat minor.
\end{lem}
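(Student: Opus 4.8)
The plan is to derive a contradiction from a putative $3$-fat minor model $(B_v\colon v\in V(H))\cup(P_e\colon e\in E(H))$ of $H$ in $G_q$. Write $A_L$ for the union of $V(L^\star)$ with the interiors of the three paths from $x_i^\star$ to $S_i$, and $A_R$ for the union of $V(R^\star)$ with the interiors of the three paths from $y_i^\star$ to $T_i$; then $V(G_q)=A_L\sqcup V(N_q)\sqcup A_R$, and in $G_q$ the set $V(N_q)$ meets $A_L$ only along $\set{S_1,S_2,S_3}$ and $A_R$ only along $\set{T_1,T_2,T_3}$, so these two triples are order-$3$ separators of $G_q$. Since $(B_v\colon v\in V(L))\cup(P_e\colon e\in E(L))$ is in particular an ordinary minor model of $L\cong K_{15}$ (and likewise for $R$), the strategy is: (1) show the model of $L$ is ``localised near $L^\star$'' and the model of $R$ ``localised near $R^\star$''; (2) deduce that each connector path $P_{x_iy_i}$ must traverse $N_q$, yielding three $(S,T)$-paths of $N_q$ that are pairwise at distance $\ge 3$, contradicting \cref{NSSproperties}.

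For the localisation I would use the standard fact that a $(k{+}1)$-connected minor cannot be split across a separation of order $\le k$: if $(A,B)$ is a separation of $G$ of order $\le k$ and $H'$ is a $(k{+}1)$-connected minor of $G$ with $|V(H')|\ge k{+}2$, then $H'$ is a minor of $G[A]$ with $A\cap B$ made a clique, or of $G[B]$ with $A\cap B$ made a clique, and moreover all but at most $|A\cap B|$ branch sets of the given model lie entirely in $A\setminus B$ (respectively $B\setminus A$), the exceptional ones meeting $A\cap B$. Applying this to the model of $L$ with $\set{S_1,S_2,S_3}$, and then on the $N_q$-side again with $\set{T_1,T_2,T_3}$, the model of $L$ is localised in $A_L$, or in $A_R$, or in $N_q$; the last is impossible since, by \cref{TreewidthNSS} and the fact that adding a clique on an $s$-element set raises treewidth by at most $s$, $\tw\bigl(N_q+K_{\set{S_1,S_2,S_3}}+K_{\set{T_1,T_2,T_3}}\bigr)\le 7+3+3=13<14=\tw(K_{15})$ and treewidth is minor-monotone. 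Symmetrically (using $\set{T_1,T_2,T_3}$ first) the model of $R$ is localised in $A_R$ or in $A_L$. If both were localised in $A_L$, then since $G_q[A_L]$ is a subdivision of the $18$-vertex graph obtained from $K_{15}$ by adding three pendant edges, and a graph $F$ has a $K_t$-minor for $t\ge 4$ iff its subdivision does (every branch set of a $K_t$-minor with $t\ge4$ contains a vertex of degree $\ge3$, which in a subdivision must be an original vertex), $G_q[A_L]$ would contain two vertex-disjoint $K_{12}$-minors, hence an $18$-vertex graph would, which is absurd as $12+12>18$. So, after possibly interchanging $L$ and $R$ (an automorphism of $H$), the model of $L$ is localised in $A_L$ and that of $R$ in $A_R$.

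Now set $X_i:=B_{x_i}\cup P_{x_iy_i}\cup B_{y_i}$ for $i\in\set{1,2,3}$; each $X_i$ is connected, and since $x_1y_1,x_2y_2,x_3y_3$ pairwise share no endpoint, $3$-fatness gives $\dist_{G_q}(X_i,X_j)\ge3$ for $i\ne j$. Each $X_i$ meets both $\set{S_1,S_2,S_3}$ and $\set{T_1,T_2,T_3}$: if $B_{x_i}$ is one of the at most three exceptional branch sets of the model of $L$ it meets $\set{S_1,S_2,S_3}$; otherwise $B_{x_i}\sse A_L$, and were $X_i$ to avoid $\set{S_1,S_2,S_3}$ then $P_{x_iy_i}$, connected, meeting $B_{x_i}\sse A_L$ and avoiding $\set{S_1,S_2,S_3}$ (the only exits from $A_L$), would lie in $A_L$, forcing $B_{y_i}$ to meet $A_L$ — impossible since $B_{y_i}$ lies in $A_R$ or meets $\set{T_1,T_2,T_3}$, and in the latter case a path inside $B_{y_i}$ from some $T_j$ to $A_L$ would pass through $\set{S_1,S_2,S_3}$. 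The symmetric argument handles $\set{T_1,T_2,T_3}$. Take a path $W_i$ in $X_i$ from $X_i\cap\set{S_1,S_2,S_3}$ to $X_i\cap\set{T_1,T_2,T_3}$, and let $Q_i$ be its portion from the last visit to $\set{S_1,S_2,S_3}$ before the first visit to $\set{T_1,T_2,T_3}$ to that first visit; then $Q_i$ has no internal vertex in either triple, so (as $N_q$ meets $A_L,A_R$ only along these triples) $Q_i$ lies in $N_q$ and is an $(S,T)$-path. Finally $Q_i\sse X_i$, and distances only grow in the induced subgraph $N_q$, so $\dist_{N_q}(Q_i,Q_j)\ge\dist_{G_q}(X_i,X_j)\ge3>2$ for $i\ne j$, contradicting \cref{NSSproperties}.

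The step I expect to need the most care is the interface between localisation and extraction: one must feed the right separator into the localisation lemma for each of $L$ and $R$ (using $\set{S_1,S_2,S_3}$ for $L$ and $\set{T_1,T_2,T_3}$ for $R$) so that the few exceptional branch sets cannot spoil the claim that each $X_i$ reaches from $\set{S_1,S_2,S_3}$ to $\set{T_1,T_2,T_3}$, and one must verify that the extracted $Q_i$ are genuine (non-degenerate) paths of $N_q$. Property (ii) of $H$ — that $H$ is isomorphic to the graph with $x_2y_2,x_3y_3$ replaced by $x_2y_3,x_3y_2$ — can be used to normalise how the connector paths thread $N_q$ (so that $Q_i$ runs from $S_i$ to $T_i$), which is convenient but not strictly needed, since \cref{NSSproperties} applies to any three $(S,T)$-paths.
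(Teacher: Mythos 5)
Your proof is correct and follows essentially the same strategy as the paper's: use the treewidth bound on $N_q$ together with the $4$-connectedness of $L$ and $R$ against the order-$3$ separators to localise the models of $L$ and $R$ near $L^\star$ and $R^\star$ respectively, use the count of degree-$\ge 3$ vertices to stop both models landing on the same side, and then extract three far-apart $(S,T)$-paths from the connectors to contradict \cref{NSSproperties}. The only differences are in packaging (you phrase the connectivity step as a general localisation lemma over the separators $\set{S_1,S_2,S_3}$, $\set{T_1,T_2,T_3}$ and work with torsos, whereas the paper works with the cut $\set{x_1^\star,\dotsc,y_3^\star}$ and deletes the exceptional branch sets), and these do not change the substance of the argument.
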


\begin{proof}
    Suppose that $G_q$ contains $H$ as a $3$-fat minor. Let $(B_v\colon v\in V(H))$ and $(P_e\colon e\in E(H))$ be a $3$-fat minor model of $H$ in $G_q$. We begin by considering the $3$-fat sub-model of $L$ in $G_q$.

    \textbf{Claim 1:}
        There exists $z\in V(L)$ such that $B_z\subseteq L^{\star}$ or $B_z\subseteq R^{\star}$.
    \begin{proof}
          Let $\tilde{N}_q=G_q-L^{\star}-R^{\star}$. Since $\tilde{N_q}$ is obtained from $N_q$ by adding paths, each with one end-vertex on $N_q$, it follows that $\tw(\tilde{N}_q)=\tw(N_q)$. Now suppose that $V(B_v)\cap V(\tilde{N}_q)\neq \emptyset$ for every $v\in V(L)$. Let $X = \set{v\in V(L)\colon V(B_v) \not \subseteq V(\tilde{N}_q)}$ and $Y=\set{uv\in E(L-X)\colon V(P_{uv}) \not \subseteq V(\tilde{N}_q)}$. Then $L-X-Y$ is a minor of $\tilde{N}_q$.
          Since every path in $G_q$ from $V(\tilde{N}_q)$ to $V(L^{\star})\cup V(R^{\star})$ contains a vertex from $\set{x_1^\star,x_2^\star,x_3^\star,y_1^\star,y_2^\star,y_3^\star}$, it follows that $|X\cup Y|\leq 6$. Therefore $\tw(\tilde{N}_q)\geq \tw(L-X-Y) \geq \tw(L)-6\geq 8$, contradicting \cref{TreewidthNSS}. As such, there exists $z\in V(L)$ such that $V(B_z)\cap V(\tilde{N}_q)=\emptyset$. Since $B_z$ is connected, it follows that either $B_z\subseteq L^{\star}$ or $B_z\subseteq R^{\star}$.
    \end{proof}
       
    By symmetry, we may assume without loss of generality that $B_z\subseteq L^{\star}$.

    \textbf{Claim 2:}  For every $v\in V(L)$, $V(B_v)\cap V(L^{\star})\neq \emptyset$. 

    \begin{proof}
        Suppose there is some vertex $v\in V(L)$ such that $V(B_v)\cap V(L^{\star})= \emptyset$. Then every $(B_z,B_v)$-path in $G_q$ contains a vertex from $\set{x_1^\star,x_2^\star,x_3^\star}$. So $G_q$ contains at most three vertex-disjoint $(B_z,B_v)$-paths, which contradicts $(B_v\colon v\in V(L))$ and $(P_e\colon e\in E(L))$ being a $3$-fat model of a $4$-connected graph.
    \end{proof}

    \textbf{Claim 3:} Every vertex of degree at least $3$ in $L^{\star}$ is contained in $B_v$ for some $v\in V(L)$.

    \begin{proof}
        Let $v\in V(L)$. Since $\deg_L(v)\geq 3$, $B_v$ contains a vertex in $G_q$ of degree at least $3$. If $V(B_v)\subseteq V(L^{\star})$, then $B_v$ contains a vertex of degree at least $3$ in $L^{\star}$. If $V(B_v)\cap V(N_q)\neq \emptyset$, then Claim 2 implies that $B_v$ contains $x_1^{\star}$, $x_2^{\star}$, or $x_3^{\star}$. So each $B_v$ contain a vertex of degree at least $3$ in $L^{\star}$. Since $\abs{\set{u\in V(L^{\star})\colon \deg_{L^{\star}}(u)\geq 3}} = \abs{V(L)}$, the claim then follows.
    \end{proof}

     We now consider the $3$-fat sub-model of $R$ in $G_q$.
    
    \textbf{Claim 4:} For every $u\in V(R)$, $V(B_u)\cap V(R^{\star})\neq \emptyset$.

    \begin{proof}
        By symmetry, Claims 1 \& 2 imply that either $V(B_u)\cap V(L^{\star})\neq \emptyset$ for every $u \in V(R)$, or $V(B_u)\cap V(R^{\star})\neq \emptyset$ for every $u \in V(R)$. Suppose that $V(B_u)\cap V(L^{\star})\neq \emptyset$ for every $u \in V(R)$. Then there is a vertex $u\in V(R)$ such that $B_u\subseteq L^{\star}$. Since $\deg_R(u)\geq 3$, it follows that $B_u$ contains a vertex of degree at least $3$ from $L^{\star}$. But by Claim 3, this implies that $V(B_u)\cap V(B_v)\neq \emptyset$ for some $v\in V(L)$, a contradiction. Therefore, $V(B_u)\cap V(R^{\star})\neq \emptyset$ for every $u \in V(R)$.
    \end{proof}

    To conclude the proof, consider the edges $x_1y_1,x_2y_2,x_3y_3\in E(H)$.
    For each $i\in \set{1,2,3}$, let $P_i$ be the path in our model that corresponds to $x_iy_i$. Since $B_{x_i}\cup P_i\cup B_{y_i}$ is connected, Claims 2 \& 4 imply that there is an $(L^{\star},R^{\star})$-path $P_i'$ in $B_{x_i}\cup P_i\cup B_{y_i}$. By \cref{NSSproperties}, there are distinct $i,j \in \set{1, 2, 3}$ such that $\dist_{G_q}(P_i',P_j')\leq 2$. Therefore, there is $X_i \in \set{B_{x_i}, P_i, B_{y_i}}$ and $X_j \in \set{B_{x_j}, P_j, B_{y_j}}$ such that $\dist_{G_q}(X_i,X_j)\leq 2$, contradicting $(B_v\colon v\in V(H))$ and $(P_e\colon e\in E(H))$ being $3$-fat.
\end{proof}

\subsection{Finding fat minors}

In this subsection, we shall find $H$ as a fat minor in length spaces (or graphs) that are quasi-isometric to $G_q$. 

\begin{lem}\label{LemmaFindingFat}
    Let $\widehat{G}$ be a length space that is $q$-quasi-isometric to $G_q$.
    If $\widehat{G}$ is a graph, then let $K = 2$, otherwise let $K = 2^{-13q^2}$.
    Then $\widehat{G}$ contains $H$ as a $K$-fat minor.
\end{lem}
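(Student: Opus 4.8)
The plan is to pull back a quasi-isometry $\phi \colon V(G_q) \to \widehat{G}$ (or rather a quasi-inverse) and build a $K$-fat model of $H$ in $\widehat{G}$ directly from the natural ``half of $H$ lives in $L^\star$, half in $R^\star$'' structure of $G_q$. First I would fix a $q$-quasi-isometry and a quasi-inverse $\psi \colon \widehat{G} \to V(G_q)$, so that distances are distorted by a multiplicative factor $q$ and an additive error $O(q)$, and every point of $\widehat{G}$ is within $O(q)$ of the image of $G_q$. The key point is that $G_q$ already contains something very close to a fat $H$ minor at scale $16q^2$: the branch sets $B_{x_i}$ for $i \le 15$ can be taken to be the high-degree vertices $x_i^\star$ of $L^\star$ (together with short balls around them), the paths $P_{x_i x_j}$ of $L$ are the subdivided edges of $L^\star$, similarly on the $R^\star$ side, and the three connector edges $x_i y_i$ are realised by the $(x_i^\star, S_i)$-path, the chosen $(S,T)$-path through $N_q$, and the $(T_i, y_i^\star)$-path. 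Because every subdivided edge of $K_{15}$ has length $16q^2$ and the added paths to $N_q$ have length $16q^2$, all the pairwise distances in $G_q$ that are required to be large by the fat-minor definition are $\ge 16q^2$, well above the $O(q^2)$ threshold we will lose through the quasi-isometry. Wait — the subtlety is the three connector paths $P_{x_1y_1}, P_{x_2y_2}, P_{x_3y_3}$: inside $N_q$, by \cref{NSSproperties} any three $(S,T)$-paths come within distance $2$ of each other, so we cannot keep all three connectors far apart through $N_q$. This is exactly where property (ii) of $H$ is used: $H$ is isomorphic to the graph where the connectors are $x_1y_1, x_2y_3, x_3y_2$, and more generally we only need \emph{some} choice of how to hook up three $x$-vertices to three $y$-vertices. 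So instead of insisting on three disjoint far-apart $(S,T)$-paths, I would route the three connectors through $N_q$ using whatever matching between $\{S_1,S_2,S_3\}$ and $\{T_1,T_2,T_3\}$ is realised by three genuinely disjoint paths — or, if forced, I would merge: pick the two connector paths that \cref{NSSproperties} forces close and argue that since they come within distance $2$, after contracting along $N_q$ we can treat a single ``bus'' through $N_q$. Hmm, more carefully: the real content is that whatever structure survives must still be enough to embed $H$ (as opposed to some proper minor of it), and property (ii) says $H$ is robust to swapping the pairing of the connectors.

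Here is the cleaner way I would organise it. Step 1: set $\delta = 16q^2$ and observe that $G_q$ contains a subgraph $G_q'$ which is (the obvious) subdivision, and exhibit an ``$\delta$-fat-like model'' of $H$ in $G_q$ with branch sets being single high-degree vertices and paths being the subdivided edges and $N_q$-connectors; the only failure of fatness is among the three $N_q$-connectors, where by \cref{NSSproperties} two of them are within distance $2$. Step 2: observe that $2 \le \delta/q^2$ is negligible at the scale we care about, and in fact — this is the point — by \cref{NSSproperties} applied to the three $(S,T)$-paths, we may \emph{reroute} so that either all three connectors are pairwise at distance $\ge \delta$ in $N_q$ (if three disjoint far paths exist for some matching), using property (ii) to relabel, or else argue that two of the three $y$-branch sets can be identified/merged along a short segment inside $N_q$ — but since $L$ and $R$ are vertex-disjoint in $H$ with only three connecting edges, and $H \cong (H \setminus \{x_2y_2,x_3y_3\}) \cup \{x_2y_3,x_3y_2\}$, the resulting object is still a model of $H$. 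Step 3: transport the model through $\psi$ (the quasi-inverse): each branch set $B_v$ becomes $N^{\,r}_{\widehat{G}}[\psi^{-1}(\text{stuff})]$ for a suitable radius $r = O(q)$, taking connected neighbourhoods so path-connectedness is preserved (using the remark that $N^r_G[\{y\}]$ is always path-connected, plus that $\psi$ of a path is a ``coarse path'' which can be thickened into a genuine path-connected set in a length space — this is where being a length space, rather than an arbitrary metric space, matters, and where the bound $K = 2^{-13q^2}$ versus $K = 2$ comes from: in a graph the neighbourhoods are integer-radius and we land at $K=2$, in a general length space the fat constant we can guarantee shrinks). Step 4: verify the two conditions: incident branch-set/path pairs still intersect (they intersected in $G_q$, and thickening only grows them), and non-incident pairs that were at distance $\ge \delta$ in $G_q$ are at distance $\ge q^{-1}\delta - q - 2r \ge q^{-1}(16q^2) - O(q) \ge K$ in $\widehat{G}$ — here $16q^2$ is chosen exactly so this beats the additive loss.

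The main obstacle I expect is Step 2/Step 3: honestly handling the three $N_q$-connectors. Naively one wants three vertex-disjoint $(S,T)$-paths in $N_q$ that are pairwise $\delta$-far apart, but \cref{NSSproperties} says that is impossible — any three $(S,T)$-paths have two within distance $2$. The resolution must be that we do \emph{not} need the connector paths themselves to be far apart, only the branch sets and paths of the \emph{final} model in $\widehat{G}$; and the trick is that the two close connectors, say those corresponding to $x_1y_1$ and $x_2y_2$, can be made to coincide on a subpath of $N_q$, after which we are modelling not $H$ but a graph obtained from $H$ by identifying $y_1$ with $y_2$ along... no. Let me reconsider: the correct use of property (ii) is that we have freedom in \emph{which} $S_i$ connects to which $T_j$; given three disjoint $(S,T)$-paths (which exist since $S,T$ each have size $3$ and $N_q$ is connected enough — actually $N_q$ has three disjoint $(S,T)$-paths) they realise some bijection $\sigma \colon \{1,2,3\} \to \{1,2,3\}$, and by composing with the isomorphism in (ii) we are free to assume the connectors of $H$ are $x_i y_{\sigma(i)}$. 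The genuinely hard part is then that even these three disjoint paths have two within distance $2$ by \cref{NSSproperties}, so \emph{this} is where I'd have to be cleverer — perhaps only use \emph{two} connectors through $N_q$ and route the third connector entirely outside $N_q$? But $x_3^\star$ and $y_3^\star$ are only joined through $N_q$ in $G_q$... So the real mechanism, and the part I'd spend the most care on, is that the construction forces any model of $H$ to send two of its three cross-edges close together, which is \emph{fine for a fat model as long as the fatness constant $K$ is below the distance $2$ scaled down} — i.e. $K = 2$ is right on the boundary in the graph case, and the whole point of \cref{CounterExampleThm}'s sharpness is that we cannot do better than $K$ comparable to $2$. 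Concretely I would show the three connector paths in $G_q$ can be chosen so that the only pair at distance $< \delta$ is at distance $\ge 1$ and that under $\psi$ this becomes distance $\ge q^{-1} - q$, which is $\ge 2^{-13q^2}$ for length spaces after rescaling, and $\ge 2$ after the integer rounding available in graphs — matching the statement exactly.
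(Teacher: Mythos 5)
There is a genuine gap at the heart of your proposal: you never actually produce three pairwise-$K$-far connections between the $L^\star$-side and the $R^\star$-side in $\widehat{G}$, and the fallback you settle on in your last paragraph does not work. By \cref{NSSproperties}, any three $(S,T)$-paths in $N_q$ have two at distance at most $2$ in $G_q$, and the quasi-isometry inequality $\dist_{\widehat{G}} \ge q^{-1}\dist_{G_q} - q$ gives a vacuous (negative) lower bound at that scale; in particular your claimed bound ``distance $\ge q^{-1}-q$, which is $\ge 2^{-13q^2}$'' is false, since $q^{-1}-q\le 0$ for every $q\ge 1$. Transporting a model that lives in $G_q$ therefore cannot succeed --- this is exactly why $G_q$ itself has no $3$-fat $H$ minor (\cref{MainNoFatMinors}). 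Your other suggested fixes also fail: choosing a different matching of $S$ to $T$ still gives three $(S,T)$-paths and is still killed by \cref{NSSproperties}, and merging the two close connectors into a single ``bus'' models a proper quotient of $H$ rather than $H$.

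The missing idea is that the second and third connections must be assembled inside $\widehat{G}$ from pieces that do not lie near any single $(S,T)$-path of $G_q$. The paper routes $x_1y_1$ through the apex of the binary tree (the set $Z_1$), deletes a neighbourhood of $Z_1$ from the image of the tree, and considers the path-components $Y_i$ of what remains that contain the images of the leaves $\ell_i$. It then builds $Z_2'$ and $Z_3'$ as two interleaved zigzags that alternately climb into the tree through a component $Y_{a_i}$ (entering at one leaf and exiting at the last leaf of that same component) and slide along the bottom path. Distinct path-components of a vertex subset of a graph are automatically at distance at least $2$ --- this is where $K=2$ comes from in the graph case --- while in a general length space one must first invoke \cref{closesets} to regroup the components, which is where $K=2^{-13q^2}$ comes from. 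The components $Y_i$ may merge unpredictably under the quasi-isometry, so one cannot control whether $S_2$ ends up joined to $T_2$ or to $T_3$; that indeterminacy, governed by the parity of the number of zigzag pieces, is the actual role of property (ii) of $H$ (your proposal invokes (ii) for the wrong reason). None of this structure appears in your write-up, so the central step of the lemma is missing.
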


\Cref{MainNoFatMinors,LemmaFindingFat} immediately implies \Cref{CounterExampleThm,CounterExampleThmLength}. Before proving \cref{LemmaFindingFat}, we need the following two lemmas to help us build our fat $H$ model in $\widehat{G}$. The first lemma allows us to find path-connected sets in $\widehat{G}$ which spans a set of elements, while the second lemma allows us to find path-connected sets in $\widehat{G}$ that are sufficiently far away from each other.

\begin{lem}\label{PathsQuasiIsometry}
    Let $q\in \NN$ and $G$ be a graph. Let $\widehat{G}$ be a length space that is $q$-quasi-isometric to $G$ with quasi-isometry $\phi$.
    For every connected induced subgraph $G[X]$ of $G$, the set $\widehat{X}= N_{\widehat{G}}^{q + 1}[\phi(X)]$ in $\widehat{G}$ is path-connected.
\end{lem}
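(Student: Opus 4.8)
The plan is to show that $\widehat{X} = N_{\widehat{G}}^{q+1}[\phi(X)]$ is path-connected by verifying that it is path-connected between the images of adjacent vertices of $G[X]$, and then propagating this along paths in the connected graph $G[X]$. Since $N_{\widehat{G}}^{q+1}[\set{p}]$ is path-connected for every single point $p$ (as noted in the preliminaries, balls around points are path-connected in a length space), it suffices to show that for each edge $uv \in E(G[X])$, the union $N_{\widehat{G}}^{q+1}[\set{\phi(u)}] \cup N_{\widehat{G}}^{q+1}[\set{\phi(v)}]$ is path-connected; this will follow if these two balls intersect, or more carefully, if there is a path between them staying inside $\widehat{X}$.

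First I would bound $\dist_{\widehat{G}}(\phi(u), \phi(v))$ for an edge $uv$ of $G$: by the quasi-isometry inequality, $\dist_{\widehat{G}}(\phi(u),\phi(v)) \le q\cdot\dist_G(u,v) + q = 2q$. Then I would take an $(\phi(u),\phi(v))$-arc $A$ in $\widehat{G}$ of length at most $2q + \epsilon$ (using the length space property). Every point $a$ on this arc satisfies $\dist_{\widehat{G}}(a,\phi(u)) \le 2q+\epsilon$, which is not immediately $\le q+1$, so a single arc between $\phi(u)$ and $\phi(v)$ need not lie in $\widehat{X}$. To fix this, I would instead chop the arc $A$ into short subarcs at points $a_0 = \phi(u), a_1, \dots, a_k = \phi(v)$ with consecutive points within distance, say, $1$ of each other, and for each intermediate point $a_i$ use the "every $\widehat{v}$ is within $q$ of some $\phi(v')$" property of the quasi-isometry to find $w_i \in V(G)$ with $\dist_{\widehat{G}}(a_i, \phi(w_i)) \le q$. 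The key point is then that consecutive $\phi(w_i)$ are close in $\widehat{G}$, hence $w_i$ and $w_{i+1}$ are close in $G$ by the lower quasi-isometry bound, so the whole detour can be kept inside $N_{\widehat{G}}^{q+1}[\phi(X')]$ for a slightly larger connected vertex set $X'$ — but this threatens to enlarge $X$, which is not allowed.

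A cleaner route, which I would actually pursue: show directly that $N_{\widehat{G}}^{q+1}[\set{\phi(u)}]$ and $N_{\widehat{G}}^{q+1}[\set{\phi(v)}]$ have a common point for each edge $uv$. Pick an $(\phi(u),\phi(v))$-arc $A$ of length at most $2q + 1$, and let $m$ be its midpoint (the point at arc-length $\le q + \tfrac12$ from each end along $A$). Then $\dist_{\widehat{G}}(m,\phi(u)) \le q + \tfrac12 \le q+1$ and likewise $\dist_{\widehat{G}}(m,\phi(v)) \le q+1$, so $m \in N_{\widehat{G}}^{q+1}[\set{\phi(u)}] \cap N_{\widehat{G}}^{q+1}[\set{\phi(v)}]$. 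Hence the union of the two balls is path-connected (two path-connected sets with a common point). Now iterate: given any $x, y \in \widehat{X}$, we have $x \in N_{\widehat{G}}^{q+1}[\set{\phi(u)}]$ and $y \in N_{\widehat{G}}^{q+1}[\set{\phi(u')}]$ for some $u, u' \in X$; take a $u$--$u'$ path $u = v_0 v_1 \cdots v_\ell = u'$ in the connected graph $G[X]$, and observe that $\bigcup_{i=0}^{\ell} N_{\widehat{G}}^{q+1}[\set{\phi(v_i)}]$ is path-connected (a union of path-connected sets, consecutive ones overlapping), contains $x$ and $y$, and is contained in $\widehat{X}$. Any $x$--$y$ path realising this connectivity lies in $\widehat{X}$, so $\widehat{X}$ is path-connected.

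The main obstacle is purely bookkeeping around the additive constant: one must be careful to use the multiplicative-and-additive form of the quasi-isometry correctly so that an edge maps to something of length at most $2q$, and then confirm that the radius $q+1$ (rather than some larger quantity) genuinely suffices — the choice of $q+1$ is exactly what makes the midpoint argument go through, since $\tfrac{1}{q}\cdot 1 \le q$ gives $\dist_G(u,v)=1 \Rightarrow \dist_{\widehat G}(\phi u,\phi v)\le 2q$, and half of $2q+\epsilon$ for small $\epsilon$ is at most $q+1$. No deeper difficulty arises; the length-space axiom supplies the near-geodesic arcs and the stated fact that metric balls about points are path-connected does the rest.
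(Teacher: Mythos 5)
Your proof is correct and follows essentially the same route as the paper's: reduce to a single edge $uv$ via connectivity of $G[X]$, use the quasi-isometry upper bound to get $\dist_{\widehat{G}}(\phi(u),\phi(v))\le 2q$, invoke the length-space axiom to obtain an arc of length at most $2q+1$, and observe that radius $q+1 > q+\tfrac12$ suffices. The only cosmetic difference is that you place the midpoint of the arc in both balls, whereas the paper notes the entire arc lies in the union of the two balls; the two observations are interchangeable.
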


\begin{proof}
    Since $G[X]$ is connected, it is enough to show that if $uv\in E(G)$, then $N_{\widehat{G}}^{q + 1}[\phi(u)] \cup N_{\widehat{G}}^{q + 1}[\phi(v)]$ is path-connected.
    Clearly both $N_{\widehat{G}}^{q + 1}[\phi(u)]$ and $N_{\widehat{G}}^{q + 1}[\phi(v)]$ are path-connected.
    Observe that $\dist_{\widehat{G}}(\phi(u), \phi(v)) \le q \dist_G(u,v) +q=2q$.
    Since $\widehat{G}$ is a length space, there is a path $P$ in $\widehat{G}$ between $\phi(u)$ and $\phi(v)$ of length at most $2q + 1$.
    Then, for each $p\in P$, $\min\set{\dist_{\widehat{G}}(p, \phi(u)), \dist_{\widehat{G}}(p, \phi(v))} \leq q + \frac{1}{2} < q + 1$, and so $P\subseteq N_{\widehat{G}}^{q + 1}[\phi(u)] \cup N_{\widehat{G}}^{q + 1}[\phi(v)]$. Thus $N_{\widehat{G}}^{q + 1}[\phi(u)] \cup N_{\widehat{G}}^{q + 1}[\phi(v)]$ is path-connected.
\end{proof}

\begin{lem}\label{closesets}
    Let $\widehat{G}$ be a length space containing path-connected sets $X_1, \dotsc, X_n$.
    Then for every $\epsilon > 0$, there exists some integer $1\le m \le n$ and path-connected sets $Y_1, \dotsc, Y_m$ pairwise at distance at least $\epsilon$ such that $\bigcup_{i=1}^n X_i \subseteq \bigcup_{i=1}^m Y_i \subseteq \bigcup_{i=1}^n N^{(n-m)\epsilon}[X_i]$.
\end{lem}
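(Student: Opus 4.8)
The plan is to induct on $n$, or equivalently to repeatedly apply a single merging step until the sets are pairwise far apart. The base of the process is: if the path-connected sets $X_1, \dotsc, X_n$ are already pairwise at distance at least $\epsilon$, then we are done by taking $m = n$ and $Y_i = X_i$; here $(n - m)\epsilon = 0$ and the inclusion $\bigcup X_i \subseteq \bigcup Y_i \subseteq \bigcup N^0[X_i] = \bigcup X_i$ is trivial. Otherwise, there exist indices $i \neq j$ with $\dist_{\widehat G}(X_i, X_j) < \epsilon$.

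The key merging step is this: suppose $\dist_{\widehat G}(X_i, X_j) < \epsilon$. Since $\widehat G$ is a length space, there is an $(X_i, X_j)$-path $Q$ of length $< \epsilon$. Replace $X_i$ and $X_j$ by the single set $X' := X_i \cup Q \cup X_j$. This set is path-connected: $Q$ meets $X_i$ in one end and $X_j$ in the other, and both $X_i, X_j$ are path-connected, so any two points of $X'$ are joined by a path within $X'$. Moreover $X' \subseteq N^{\epsilon}[X_i] \cap \dots$; more precisely every point of $X'$ lies within distance $\epsilon$ of $X_i$ (a point of $Q$ is within $<\epsilon$ of its $X_i$-endpoint, and a point of $X_j$ is within $<\epsilon$ of $Q \cap X_j \subseteq$ ... within $< \epsilon$ of $X_i$). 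So $X' \subseteq N^{\epsilon}[X_i]$, hence also $X' \subseteq N^\epsilon[X_i] \cup N^\epsilon[X_j]$. We now have $n - 1$ path-connected sets $X_1, \dotsc, \widehat{X_i}, \dotsc, \widehat{X_j}, \dotsc, X_n, X'$ whose union contains $\bigcup_{k=1}^n X_k$.

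Now I would apply the inductive hypothesis (with the same $\epsilon$) to these $n - 1$ sets. This gives $1 \le m \le n - 1$ path-connected sets $Y_1, \dotsc, Y_m$, pairwise at distance at least $\epsilon$, with the union of the $n-1$ sets contained in $\bigcup_{i=1}^m Y_i$, and $\bigcup_{i=1}^m Y_i$ contained in the union of the $(n-1)\epsilon$... wait — one must be careful about the radii. The inductive statement for $n - 1$ sets gives $\bigcup_i Y_i \subseteq \bigcup_{k} N^{((n-1) - m)\epsilon}[Z_k]$ where the $Z_k$ are the $n-1$ sets we fed in. Each $Z_k$ is either one of the original $X_\ell$, or is $X' \subseteq N^\epsilon[X_i]$. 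Hence $N^{((n-1)-m)\epsilon}[Z_k] \subseteq N^{((n-1)-m)\epsilon + \epsilon}[X_\ell] = N^{(n - m)\epsilon}[X_\ell]$ for a suitable original index $\ell$ (using that $N^a[N^b[S]] \subseteq N^{a+b}[S]$, which holds in any length space since one can concatenate the witnessing paths). Combining, $\bigcup_i Y_i \subseteq \bigcup_\ell N^{(n-m)\epsilon}[X_\ell]$, and $\bigcup_\ell X_\ell \subseteq \bigcup_i Y_i$ since the $X'$-containing family covers all original $X_\ell$. This closes the induction.

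The main thing to get right — and the only real subtlety — is the bookkeeping of the radii: each merge potentially costs one factor of $\epsilon$ in the outer neighbourhood, and since at most $n - m$ merges occur (each reduces the count by one, from $n$ down to $m$), the total blow-up is exactly $(n-m)\epsilon$, matching the statement. I would state and use the elementary fact $N_{\widehat G}^a[N_{\widehat G}^b[S]] \subseteq N_{\widehat G}^{a+b}[S]$ explicitly (immediate from concatenating an $(x, N^b[S])$-path of length $\le a$ with a short $(\,\cdot\,, S)$-path), and also the fact that the union of two path-connected sets meeting a common connecting path is path-connected; both are routine in a length space. Everything else is formal induction.
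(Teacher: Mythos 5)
Your proof is correct and follows essentially the same greedy-merging induction on $n$ as the paper's; the only cosmetic difference is that you merge two close sets via a short connecting path, taking $X' = X_i \cup Q \cup X_j$, whereas the paper takes $N^{\epsilon}[X_{n-1}] \cup X_n$, and both give the same $(n-m)\epsilon$ radius bookkeeping. One parenthetical claim, that $X' \subseteq N^{\epsilon}[X_i]$, is false (a point deep in $X_j$ need not be within $\epsilon$ of $X_i$), but this is harmless since the containment your argument actually uses, $X' \subseteq N^{\epsilon}[X_i] \cup N^{\epsilon}[X_j]$, is stated and correct.
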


\begin{proof}
    The lemma is trivially true if $n=1$, so we shall proceed inductively on $n$ assuming that the lemma holds for $n-1$.
    If $X_1, \dotsc, X_n$ are pairwise at distance at least $\epsilon$, then we can simply take $m=n$ and $Y_i=X_i$ for $1\le i \le m$.
    So, without loss of generality, we may assume that the distance between $X_{n-1}$ and $X_n$ is strictly less than $\epsilon$.
    Now, let $X_{n-1}'=N^\epsilon[X_{n-1}] \cup X_n$.
    Then $X_{n-1}'$ is a path-connected set with $ X_{n-1} \cup X_n \subseteq X_{n-1}' \subseteq N^{\epsilon}[X_{n-1}] \cup N^{\epsilon}[X_{n}]$.
    Applying the inductive hypothesis to $X_1, \dotsc, X_{n-2} , X_{n-1}'$, it now follows that there exists some integer $1\le m \le n-1$ and path-connected sets $Y_1, \dotsc, Y_m$ pairwise at distance at least $\epsilon$ such that 
    \[
    \bigcup_{i=1}^n X_i \subseteq 
    \bigcup_{i=1}^{n-2} X_i \cup X_{n-1}' \subseteq
    \bigcup_{i=1}^m Y_i \subseteq 
    \bigcup_{i=1}^{n-2} N^{(n-1-m)\epsilon}[X_i]
    \cup N^{(n-1-m)\epsilon}[X_{n-1}']
    \subseteq
    \bigcup_{i=1}^n N^{(n-m)\epsilon}[X_i],
    \]
    as desired.
\end{proof}

We are now ready to prove \cref{LemmaFindingFat}.

\begin{proof}[Proof of \cref{LemmaFindingFat}]
    Let $\phi$ be a $q$-quasi-isometry from $G_q$ to $\widehat{G}$. For each connected induced subgraph $G_q[A]$ of $G_q$, let $\widehat{A} = N_{\widehat{G}}^{q + 1}[\phi(A)]$. By \cref{PathsQuasiIsometry}, each such $\widehat{A}$ is a path-connected set in $\widehat{G}$.
    Observe that if $G_q[A_1]$, $G_q[A_2]$ are connected induced subgraphs of $G_q$ with $\dist_{G_q}(A_1,A_2) \ge 8q^2$, then $\dist_{\widehat{G}}(\widehat{A}_1, \widehat{A}_2)
    \ge q^{-1} \dist_{G_q}(A_1,A_2) - q - 2(q + 1)
    \ge 8q - q - 2(q + 1)
    \ge 2$ (since $q\geq 1$). We shall use this fact repeatedly.
    Throughout the proof, the reader may wish to refer to \cref{fig:findingminors}.

    Recall, in the definition of $H$ and $G_q$, that the 15-cliques $L$ and $R$ have vertex-sets $\set{x_1, \dotsc, x_{15}}$ and $\set{y_1, \dotsc, y_{15}}$ and that their $16q^2$-subdivisions $L^\star$ and $R^\star$ have high degree vertices $x_1^\star, \dotsc, x_{15}^\star$ and $y_1^\star, \dotsc, y_{15}^\star$, respectively.
    We being by construction a $K$-fat minor model of $L \cup R = H \setminus \set{x_1 y_1, x_2 y_2, x_3 y_3}$ in $\widehat{G}$.
    For $1\le i \le 15$, let $L_i = N_{G_q}^{4q^2}[x_i^\star]$ and $R_i = N_{G_q}^{4q^2}[y_i^\star]$. For each pair $1 \le i < j \le 15$, let $L_{i, j}$ be the shortest path in $L^\star \subset G_q$ between $L_i$ and $L_j$ and $R_{i, j}$ be the shortest path in $R^\star \subset G_q$ between $R_i$ and $R_j$.

    For distinct pairs $(i,j)$ and $(i',j')$, we have that $\dist_{G_q}(L_{i,j}, L_{i',j'}) \ge 8q^2$.
    Therefore, $\dist_{\widehat{G}}(\widehat{L}_{i, j}, \widehat{L}_{i',j'}) \ge 2$.
    Similarly, for each pair $i \neq j$, we have that $\dist_{G_q}(L_i, L_j) = 8q^2$, and therefore $\dist_{\widehat{G}}(\widehat{L}_i, \widehat{L}_j) \ge 2$.
    Furthermore, for each triple $i,j,k$ with $i<j$, $k \not\in \set{i,j}$ we have that $\dist_{G_q}(L_{i,j},L_{k}) \ge 8q^2$, and therefore $\dist_{\widehat{G}}(\widehat{L}_{i, j}, \widehat{L}_k) \ge 2$. This all holds similarly for the sets $R_i$, $R_{i,j}$. Furthermore, for $X\in \{L_i,L_{i,j}\colon 1\leq i<j\leq 15\}$ and $Y\in \{R_i,R_{i,j}\colon 1\leq i<j\leq 15\}$, we clearly have $\dist_{\widehat{G}}(\widehat{X},\widehat{Y})\geq 2$. So $(\widehat{L}_i,\widehat{R}_i\colon 1\leq i\leq 15)$ and $(\widehat{L}_{i,j},\widehat{R}_{i,j}\colon 1\leq i<j\leq 15)$ is a 2-fat minor model of $H \setminus \set{x_1 y_1, x_2 y_2, x_3 y_3}$ in $\widehat{G}$.

    We now define a path in $\widehat{G}$ which will correspond to $x_1 y_1 \in E(H)$ in our model.
    Recall the vertices $S_1$, $S_2$, $S_3$, $T_1$, $T_2$, $T_3$ that appear in the construction of $G_q$ (see \cref{fig:G}).
    For each $1 \le a \le 3$, let $L_a'$ be the shortest path in $G_q$ between $L_a$ and $S_a$.
    Similarly to before, for each $1\le a \le 3$, we have that $\dist_{\widehat{G}}(\widehat{L}'_a, \widehat{L}_i)\ge 2$ for all $1\le i \le 15$ with $i\neq  a$, and $\dist_{\widehat{G}}(\widehat{L}'_a, \widehat{L}_{i, j})\ge 2$ for all $1\le i<j\le 15$. We define $R'_1$, $R'_2$, $R'_3$ similarly. For each $1\le a \le 3$, we have that $\dist_{\widehat{G}}(\widehat{R}'_a, \widehat{R}_i)\ge 2$ for all $1\le i \le 15$ with $i \neq a$, and $\dist_{\widehat{G}}(\widehat{R}_a', \widehat{R}_{i, j})\ge 2$ for all $1\le i<j\le 15$.
    Let $Z_1 = \widehat{L}'_1 \cup \widehat{R}'_1$.
    Adding $Z_1$ to our fat minor model gives a 2-fat minor model of $H \setminus \set{x_2 y_2, x_3 y_3}$.

    \begin{figure}[H]
        \centering
        \includegraphics[width=\textwidth]{"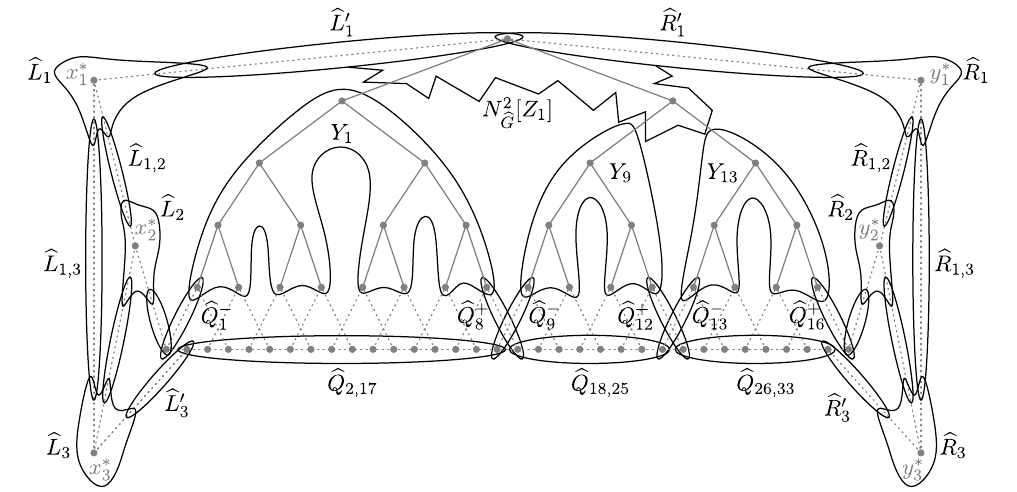"}
        \caption{The length space $\widehat{G}$ (black) on top of the corresponding objects in $G_q$ (grey). We have $Z_1 = \widehat{L}'_1 \cup \widehat{R}'_1$,
        $Z_2 = \widehat{L}_2' \cup \widehat{Q}^-_{1} \cup Y_1\cup \widehat{Q}_8^+ \cup \widehat{Q}_{18,25} \cup \widehat{Q}_{13}^- \cup Y_{13} \cup \widehat{Q}_{16}^+ \cup \widehat{R}_2'$, and 
        $Z_3 = \widehat{L}_3' \cup \widehat{Q}_{2,17} \cup \widehat{Q}_9^- \cup Y_9 \cup \widehat{Q}_{12}^+ \cup \widehat{Q}_{26,33} \cup \widehat{R}_3'$.
        }
        \label{fig:findingminors}
    \end{figure}

    Recall the construction of $N_q$ shown in \cref{fig:NSS}. Let $B_q$ denote the complete binary tree of height $13q^2$ (this is the graph consisting of solid edges) and let $\ell_1, \dotsc, \ell_{2^{13q^2}}$ be its leaves from left to right. Let $p_1, \dotsc, p_{2^{13 q^2 + 1} + 2}$ denote the thick vertices on the bottom path from left to right (so $p_1, p_2 \in S$ and $p_{2^{13 q^2 + 1} + 1}, p_{2^{13 q^2 + 1} + 2} \in T$).
    Let $B'_q = N_{N_q}^{7q^2}[V(B_q)]$; that is, $B'_q$ is the graph obtained from a complete binary tree of height $13q^2 + 1$ by subdividing every edge that is incident to a leaf $7q^2-1$ times. For each $1 \le i \le 2^{13q^2}$, let $\ell_i^-$ be the leaf of $B'_q$ descended from $\ell_i$ to the left, and let $\ell_i^+$ be the leaf of $B'_q$ descended from $\ell_i$ to the right.
    Let $B_q^\ast = \widehat{B'_q} \setminus N_{\widehat{G}}^3[Z_1]$. 
    For each $1\le i \le 2^{13q^2}$ and each vertex $w$ on the unique path of $B'_q$ between $\ell_i^-$ and $\ell_i^+$, we have
    $\dist_{\widehat{G}}(\phi(w), Z_1) \ge q^{-1}\dist_{G_q}(w, L_1'\cup R_1')-(q+1) -q
    \ge
    13q-2q-1
    >
    (q+1) +3$.
    Therefore, for each such $w$, we have that $N_{\widehat{G}}^{q+1}[\phi(w)]\subseteq B_q^*$.
    In particular, for each $1 \le i \le 2^{13q^2}$, we have $\phi(\ell_i)\in B_q^*$. For each $1 \le i \le 2^{13q^2}$, let $X_i$ be the component of $B_q^*$ that contains $\phi(\ell_i)$. Then for each $1 \le i \le 2^{13q^2}$, by \cref{PathsQuasiIsometry}, we have $\phi(\ell_i^-),\phi(\ell_i^+)\in X_i$.

    If $\widehat{G}$ is a graph, then set $Y_i=X_i$ for each $1\le i \le 2^{13q^2}$. In this case, if $Y_i\neq  Y_j$, then $\dist_{\widehat{G}}(Y_i,Y_j) \ge 2 = K$.
    This is the only place where we need to treat the case that $\widehat{G}$ is a graph separately. When $\widehat{G}$ is not a graph, we must define $Y_1, \dotsc, Y_{2^{13q^2}}$ differently, as follows.
    By \cref{closesets}, there exist path-connected sets $Y'_1, \dotsc, Y'_m$ pairwise at distance at least $2^{-13q^2}$ in $\widehat{G}$ such that $\bigcup_{i=1}^{13q^2} X_i \subseteq \bigcup_{i=1}^m Y'_i \subseteq \bigcup_{i = 1}^{13q^2} N^{1}_{\widehat{G}}[X_i]
    \subseteq N^1_{\widehat{G}}[N^{q + 1}_{\widehat{G}}[\phi(B'_q)]\setminus N_{\widehat{G}}^3[Z_1]]
    = N^{q+2}_{\widehat{G}}[\phi(B'_q)]\setminus N_{\widehat{G}}^2[Z_1]$.
    Now, for each $1\le i \le 2^{13q^2}$, let $Y_i=Y_j'$, where $\phi(\ell_i)\in Y_j'$.
    Then for every pair $Y_i,Y_j$ with $Y_i \neq Y_j$, we have that $\dist_{\widehat{G}}(Y_i,Y_j) \ge 2^{-13q^2} = K$.
    Note that for each $1\le i \le 2^{13q^2}$, we have $\dist_{\widehat{G}}(Y_i, Z_1) \ge 2 \ge K$.
    We no longer need to distinguish between the cases of whether or not $\widehat{G}$ is a graph.

    Recall that $p_1, \dotsc, p_{2^{13q^2 + 1} + 2}$ are the thick vertices on the bottom path in \cref{fig:NSS}. 
    For each $1 \le i < j \le 2^{13q^2 + 1} + 2$, let $Q_{i, j}$ be the sub-path of the bottom path from $p_i$ to $p_j$
    Observe that, for each tuple $1\le i < j < i' < j' \le 2^{13q^2+1} + 2$, we have $\dist_{\widehat{G}}(\widehat{Q}_{i,j}, \widehat{Q}_{i',j'}) \ge 2$.
    For each $1 \le i < j \le 2^{13q^2 + 1} +2$ and $1 \le k \le 2^{13q^2}$, we also have $\dist_{\widehat{G}}(\widehat{Q}_{i,j}, Y_k) \ge \dist_{\widehat{G}}(\widehat{Q}_{i,j}, N^{q + 2}_{\widehat{G}}[\phi(B'_q)])
    = \dist_{\widehat{G}}(\widehat{Q}_{i,j}, \phi(B'_q)) - q - 2 \ge q^{-1}\dist_{G_q}(Q_{i,j},B'_q) - 2q - 2 \ge 5q - 2 \ge 2$.
    For each $1\le i \le 2^{13q^2}$, let $Q_i^-$ be the path between $\ell_i^-$ and $p_{2i-1}$ in $N_q \setminus B_q$ and let $Q_i^+$ be the path between $\ell_i^+$ and $p_{2i + 2}$ in $N_q \setminus B_q$.
    Similarly, for $1\le i < j \le 2^{13q^2}$, $W_i \in \set{Q_i^-, Q_i^+}$ and $W_j \in \set{Q_j^-, Q_j^+}$, we have $\dist_{\widehat{G}}(\widehat{W}_i, \widehat{W}_j) \ge 2$.
    Furthermore, for $1 \le i < j \le 2^{13q^2 + 1} +2$ and $1 \le k \le 2^{13q^2}$, if either $2k - 1 < i$ or $2k - 1 > j$, then $\dist_{\widehat{G}}(\widehat{Q}_{i, j}, \widehat{Q}_{k}^-) \ge 2$, and similarly, if either $2k + 2 < i$ or $2k + 2 > j$, then $\dist_{\widehat{G}}(\widehat{Q}_{i,j}, \widehat{Q}_{k}^+) \ge 2$.
    
    For $1\le i ,k \le 2^{13q^2}$, if $Y_k \neq Y_i$, then we have $\dist_{\widehat{G}}(\widehat{Q}_i^-, Y_k)
    \ge 
    \dist_{\widehat{G}}(\widehat{Q}_i^-, B_q^* \backslash Y_i)
    \ge 
    \dist_{\widehat{G}}(\widehat{Q}_i^-, \widehat{B}_q' \backslash Y_i)
    \ge 
    \dist_{\widehat{G}}(\widehat{Q}_i^-, \widehat{B_q'\backslash J_i})$, where $J_i$ is the set of vertices of $B_q'$ on the path between $\ell_i$ and $\ell_i^-$ and the last inequality holds since $\widehat{J}_i\subseteq Y_i$. 
    Expanding on this, we further have that 
    \begin{equation*}
        \dist_{\widehat{G}}(\widehat{Q}_i^-, \widehat{B_q'\backslash J_i}) \ge q^{-1} \dist_{G_q}(Q_i^-, B_q'\backslash J_i) -q-2(q+1) \ge 7q -(3q+2) \ge 2.
    \end{equation*}
    Thus, for $i$ and $k$ with $Y_i \neq Y_k$, $\dist_{\widehat{G}}(\widehat{Q}_i^-, Y_k) \geq 2$ and, by symmetry, $\dist_{\widehat{G}}(\widehat{Q}_i^+, Y_k) \geq 2$.

    For each $1\le i \le 2^{13q^2}$, let $e(i)$ be maximum such that $\phi(\ell_{e(i)})\in Y_i$.
    Choose $a_1, \dotsc, a_m$ such that $a_1=1$, $e(a_m) = 2^{13q^2}$, and $a_{i+1} = e(a_i)+1$ for each $1\le i < m$.
    For each $1\le i \le m$, let $D_i = \widehat{Q}_{a_i}^- \cup Y_{a_i} \cup \widehat{Q}_{e(a_i)}^+$.
    Observe that for each pair $1\le i < j \le m$, we have that $\dist_{\widehat{G}}(D_i,D_j)
    \ge K$.
    For each $1\le i \le m$, let $F_i = \widehat{Q}_{2a_i, 2a_i + 1}$. Observe that $\dist(F_i,F_j)\ge 2$ for each pair $1\le i<j\le m$ and $\dist(F_i,D_j)\ge 2$ for each pair $1\le i,j\le m$ with $\abs{i - j} \neq 1$.
    For each $1\le i \le m$, both $F_i$ and $D_i$ are path-connected. Let $D_{m+1}=F_{m+1}= \emptyset$.
    Notice that $Z = \bigl(\bigcup_{i=1}^m D_i \bigr) \cup \bigl(\bigcup_{i=1}^m F_i\bigr) $ has two path-connected components $Z_2'$, $Z_3'$ with $Z_2' = \bigcup_{i=1}^{\ceil{m/2}} (D_{2i-1} \cup F_{2i})$ and $Z_3' = \bigcup_{i=1}^{\ceil{m/2}} (F_{2i - 1} \cup D_{2i})$, 
    and $\dist(Z_2', Z_3') \ge K$.
    If $m$ is odd, then $\phi(S_2),\phi(T_2) \in Z_2'$ and $\phi(S_3),\phi(T_3) \in Z_3'$, and let $Z_2 = \widehat{L}_2' \cup  Z_2' \cup \widehat{R}_2'$ and $Z_3 = \widehat{L}_3' \cup  Z_3' \cup \widehat{R}_3'$.
    If $m$ is even, then $\phi(S_2),\phi(T_3) \in Z_2'$ and $\phi(S_3), \phi(T_2) \in Z_3'$, and let $Z_2 = \widehat{L}_2' \cup  Z_2' \cup \widehat{R}_3'$ and $Z_3= \widehat{L}_3' \cup Z_3' \cup \widehat{R}_2'$.
    Finally, by adding $Z_2$ and $Z_3$ to our fat minor model, we obtains a $K$-fat minor model of $H$ if $m$ is odd, and of $(H \backslash \set{x_2 y_2, x_3 y_3}) \cup \set{x_2 y_3, x_3 y_2}$ if $m$ is even.
    Since $H$ and $(H \backslash \set{x_2 y_2, x_3 y_3}) \cup \set{x_2 y_3, x_3 y_2}$ are isomorphic, $\widehat{G}$ contains $H$ as a $K$-fat minor, as desired.
\end{proof}

\section{Reducing coarseness}

In this section we prove \cref{weakfavortie2}.
For a graph $G$ and a positive integer $K$, let \defn{$G^K$} denote the power graph obtained from $G$ by adding an edge between every pair of distinct vertices at distance at most $K$. The importance of power graphs to coarse graph theory was previously observed by Seymour and the fourth author; see~\cite[Theorem~4]{albrechtsen2023induced}.

\begin{proof}[Proof of \cref{weakfavortie2}.]
    Let $G$ be a graph that does not contain a $K$-fat $H$ minor for every $H \in \mathcal{H}$. Let $\widehat{G} = G^K$. Then the identity map from $V(G)\to V(\widehat{G})$ is a $K$-quasi-isometry since, for each pair of vertices $x,y\in V(G)$, we have 
    $K^{-1} \cdot \dist_G(x,y) \leq \dist_{\widehat{G}}(x,y)\leq \dist_G(x,y)$. It remains to show that $\widehat{G}$ contains no 3-fat $H$ minor for each $H \in \mathcal{H}$.

    Suppose that $\widehat{G}$ contains $H$ as a $3$-fat minor for some $H\in \mathcal{H}$. Let $(B_v\colon v\in V(H))$ and $(P_e \colon e\in E(H))$ be a $3$-fat minor model of $H$ in $\widehat{G}$. We work towards constructing a $K$-fat minor model of $H$ in $G$. Note that, for distinct $u, v \in V(H)$, $\dist_{G^K}(B_u, B_v) \geq 3$ and so $\dist_G(B_u, B_v) \geq 2K + 1$.
    For each $v\in V(H)$, let $B'_v=\set{x\in V({G})\colon \dist_{G}(x, B_v)\leq K/2}$. Then $G[B_v']$ is connected. Furthermore, for distinct $u, v\in V(H)$, it follows that 
    \begin{equation*}
        \dist_G(B'_u, B'_v) \geq \dist_G(B_u, B_v) - 2 \floor{K/2} \geq K + 1.
    \end{equation*}
    For each $e \in E(H)$, let $X_e = \set{x \in V({G}) \colon \dist_{G}(x, P_e) \leq K/2}$. Then, as above, $G[X_e]$ is connected and, for distinct $e, f \in E(H)$, $\dist_G(X_e, X_f) \geq \dist_G(P_e, P_f) - 2 \floor{K/2} \geq K + 1$. Similarly, for each $e \in E(H)$ and $v \in V(H)$ that are not incident, $\dist_G(X_e, B'_v) \geq \dist_G(P_e, B_v) - 2 \floor{K/2} \geq K + 1$.

    Let $e = uv \in E(H)$. By definition of fat minor model, $B_u$ and $P_e$ intersect and so $B'_u$ and $X_e$ intersect (and by symmetry so do $B'_v$ and $X_e$). Let $P'_e$ be a path in $G[X_e]$ between $B'_u \cap X_e$ and $B'_v \cap X_e$. Then $(B'_v \colon v\in V(H))$ and $(P'_e \colon e\in E(H))$ is a $K$-fat minor model of $H$ in $G$, giving the desired contradiction.
\end{proof}

Using a simple scaling argument, we now show an analogous tightness result for \cref{CounterExampleThmLength}.

\begin{prop}\label{scale}
    For any collection $\mathcal{H}$ of graphs, there exists a function $f \colon \mathbb{N} \to \mathbb{N}$ such that, if $(M,d)$ is a length space with no $K$-fat $H$ minor for any $H \in \mathcal{H}$, then, for every $0 < K' < K$, $(M, d)$ is $(K/K')$-quasi-isometric to a length space that contains no $K'$-fat $H$ minor for any $H \in \mathcal{H}$.
\end{prop}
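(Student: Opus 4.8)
The plan is to prove \cref{scale} by a straightforward rescaling of the metric, exploiting the fact that both ``no $K$-fat $H$ minor'' and ``$q$-quasi-isometric'' are conditions that transform predictably under scaling all distances by a constant factor. Given $(M,d)$ with no $K$-fat $H$ minor for any $H \in \mathcal{H}$, and given $0 < K' < K$, I would consider the length space $(M', d')$ where $M' = M$ as a set and $d' = (K'/K)\cdot d$. This is again a length space: any approximate geodesic for $d$ of length $\le d(x,y)+\epsilon$ becomes an arc of $d'$-length $\le d'(x,y)+(K'/K)\epsilon$, and $(K'/K)\epsilon$ ranges over all positive reals as $\epsilon$ does. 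So the rescaled object lies in the correct category.

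Next I would check the two claimed properties. \textbf{Quasi-isometry.} The identity map $M \to M'$ satisfies $d'(x,y) = (K'/K) d(x,y)$, so with $c = K/K' > 1$ we have $c^{-1} d(x,y) = d'(x,y) \le c\, d(x,y)$, hence certainly $c^{-1} d(x,y) - c \le d'(\mathrm{id}(x),\mathrm{id}(y)) \le c\, d(x,y) + c$; and the ``coarsely surjective'' condition is immediate since the identity is a bijection. Thus $(M,d)$ is $(K/K')$-quasi-isometric to $(M',d')$, and since $K/K'$ need not be an integer I would either phrase the conclusion with $\lceil K/K' \rceil$ or simply note that $q$-quasi-isometry for non-integer $q$ upgrades to $\lceil q\rceil$-quasi-isometry. (The function $f \colon \mathbb{N}\to\mathbb{N}$ in the statement can be taken to depend only on the ratio; I would set $f$ to be whatever bookkeeping is needed, e.g.\ $f(n) = n$, since the ratio $K/K'$ already controls everything — I should double-check whether the intended role of $f$ is to bound $\lceil K/K'\rceil$ in terms of something, but as stated $f$ plays essentially no role and I would just produce a valid $f$.) \textbf{No $K'$-fat minor.} Suppose for contradiction $(M',d')$ contains a $K'$-fat $H$ minor model $(B_v)\cup(P_e)$ for some $H \in \mathcal{H}$. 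The underlying point sets are unchanged, path-connectedness is a purely topological notion and is unaffected by rescaling, and the incidence/intersection conditions are unchanged. For the distance conditions: for non-incident pairs $X,Y$ we have $d'(X,Y) \ge K'$, hence $d(X,Y) = (K/K') d'(X,Y) \ge K$. So the very same collection of sets is a $K$-fat $H$ minor model in $(M,d)$, contradicting the hypothesis.

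Assembling these, the identity map witnesses the $(K/K')$-quasi-isometry and $(M',d')$ has no $K'$-fat $H$ minor for any $H \in \mathcal{H}$, completing the proof. I do not expect any genuine obstacle here; the only points requiring a moment of care are (a) confirming that rescaling preserves the length-space axiom (handled above by rescaling $\epsilon$), (b) the non-integrality of $K/K'$ and how it interacts with the integer-valued definition of quasi-isometry, and (c) pinning down what the function $f$ in the statement is actually supposed to accomplish — I suspect it is vestigial or is meant to absorb the ceiling, and I would state explicitly which $f$ works rather than leaving it implicit.
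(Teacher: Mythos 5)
Your proposal is correct and is essentially identical to the paper's proof: the paper also rescales the metric to $d/(K/K')$, observes that the identity map is a $(K/K')$-quasi-isometry, and notes that any $K'$-fat minor model in the rescaled space would be a $K$-fat minor model in $(M,d)$. Your side observations (that the function $f$ in the statement is vestigial, and that $K/K'$ need not be an integer) are accurate but do not affect the argument.
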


\begin{proof}
    Clearly $(M, d)$ is $(K/K')$-quasi-isometric to $(M, d/(K/K'))$.
    Furthermore, for $H\in \mathcal{H}$, if $\set{B_v\colon v\in V(H)} \cup \set{P_e \colon e\in E(H)}$ is a $K'$-fat minor model of $H$ in $(M, d/(K/K'))$, then $\set{B_v\colon v\in V(H)} \cup \set{P_e \colon e\in E(H)}$ is a $K$-fat minor model of $H$ in $(M,d)$.
    Since $(M,d)$ contains no $K$-fat minor, it follows that $(M, d/(K/K'))$ contains no $K'$-fat minor.
\end{proof}

\section{Concluding remarks and conjectures}

We conclude with some remarks on our construction of the graphs $H$ and $G_q$ in \cref{CounterExampleThm,CounterExampleThmLength}. First, observe that the graph $G_q$ has maximum degree $15$. Now graphs with bounded maximum degree are quasi-isometric to graphs with maximum degree $3$ (this can be achieved by replacing each vertex by a bounded height binary tree). Since excluding a graph as a fat minor is an invariant under quasi-isometry (with slight change in parameter), it follows that \cref{CounterExampleThm} implies that Georgakopoulos and Papasoglu's conjecture is false even if we assume that the graph with no $3$-fat $H$ minor has maximum degree $3$.

Second, in our construction of the graph $H$, instead of choosing $L$ and $R$ to be copies of $K_{15}$, we could instead take $L$ and $R$ to be copies of any $4$-connected graph with treewidth at least $14$ that contains a automorphism that fixes $x_1$ (respectively $y_1$) and swaps $x_2$ with $x_3$ (respectively $y_2$ with $y_3$). In particular, one can choose $L$ and $R$ to be copies of a planar graph, and thus enabling one to make $H$ planar. So the conjecture of Georgakopoulos and Papasoglu is false even when $H$ is planar.

Let us remark that one particularly interesting case of Georgakopoulos and Papasoglu's~\cite{georgakopoulos2023graph} conjecture that remains open is for graphs that exclude both $K_5$ and $K_{3,3}$ as fat minors; it is conjectured that any such graph is quasi-isometric to a planar graph~\cite{georgakopoulos2023graph}, which would imply a coarse analogue of Kuratowski's theorem~\cite{kuratowski1930probleme}.
As $q$ increases, $N_q$ contains increasingly fat $K_5$ and $K_{3,3}$ minors, and so we currently see no way to adapt our construction to disprove the coarse Kuratowski conjecture.

Now, despite Georgakopoulos and Papasoglu's~\cite{georgakopoulos2023graph} conjecture being false, there are natural weaker statements which would allow some results from graph minor theory to be lifted to the coarse setting. We mention two such results. Firstly, one may weaken the conjecture so that the graph forbidden as a fat minor is different to the graph forbidden as a minor.

\begin{conj}\label{weakfavorite}
    For any finite graph $H$, there exists a finite graph $H'$ and a function $f_H \colon \NN \to \NN$ \textup{(}which depends only on $H$\textup{)} such that if $G$ is a graph or length space with no $K$-fat $H$ minor, then $G$ is $f_H(K)$-quasi-isometric to a graph with no $H'$ minor.
\end{conj}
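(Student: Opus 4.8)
The plan is to reduce the conjecture to a single value of $K$ and to complete graphs $H$, and then to attack what remains via a quasi-isometric refinement of the Graph Minor Structure Theorem. Fat minors are monotone under the subgraph relation: given a $K$-fat minor model of $K_t$, deleting the sets $P_e$ with $e \notin E(H)$ yields a $K$-fat minor model of any subgraph $H \subseteq K_t$, since no pairwise distance is affected. As $H$ is a subgraph of $K_{\abs{V(H)}}$, a graph with no $K$-fat $H$ minor has no $K$-fat $K_{\abs{V(H)}}$ minor. Combining this with \cref{weakfavortie2} --- which replaces ``no $K$-fat $H$ minor'' by ``no $3$-fat $H$ minor'' at the cost of a factor $K$ in the quasi-isometry constant --- and with the observation that a length space is quasi-isometric to a graph (pass to a net), the conjecture reduces to the statement that \emph{for every $t \in \NN$ there are $t', g \in \NN$ such that every graph with no $3$-fat $K_t$ minor is $g$-quasi-isometric to a graph with no $K_{t'}$ minor}; one then takes $H' = K_{t'(\abs{V(H)})}$ and $f_H(K)$ linear in $K$.

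The target is thus a coarse Graph Minor Structure Theorem: a graph $G$ with no $3$-fat $K_t$ minor should be quasi-isometric to a graph $\widehat G$ admitting a tree-decomposition of bounded adhesion whose torsos are obtained from graphs embeddable in surfaces of bounded Euler genus by adding boundedly many apex vertices and bounded-width vortices. Any such $\widehat G$ has bounded Hadwiger number, hence has no $K_{t'}$ minor for a suitable $t' = t'(t)$, which is what is needed. Several base cases are already available: $t = 3$ is Manning's characterisation of quasi-trees~\cite{manning2005geometry}, $t = 4$ is the Fujiwara--Papasoglu characterisation of quasi-cacti~\cite{fujiwara2023coarse}, and $H = K_{2,3}$ and $H = K_{1,m}$ are handled in~\cite{chepoi2012constant} and~\cite{georgakopoulos2023graph}.

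To prove the general statement I would try to imitate the classical argument. A graph with no $3$-fat $K_t$ minor ought to contain no ``fat bramble'' of large order, and a coarse analogue of the Bellenbaum--Diestel construction should then yield a tree-decomposition --- of $G$, or of a graph quasi-isometric to $G$ --- into ``coarsely basic'' pieces, namely pieces of bounded coarse treewidth together with pieces coarsely embeddable in a bounded-genus surface with boundedly many apices. The crucial surgical step is to straighten each coarsely basic piece into a genuinely structured one: a piece of bounded coarse treewidth must be replaced by an honest bounded-treewidth graph (a coarse excluded-grid or wall theorem), and a coarsely planar piece by an honestly planar graph. This rigidity step is where the proofs of Manning and of Fujiwara--Papasoglu do their real work, and its planar instance is essentially the coarse Kuratowski problem, which, as noted above, is open.

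The main obstacle is precisely this rigidity-and-amalgamation step. Quasi-isometries do not preserve small separators, so a ``coarse tree-decomposition of bounded adhesion'' is an object that must be engineered rather than simply read off $G$, and turning such a decomposition into a single genuinely structured graph requires controlling how the honest pieces are reglued along their thin interfaces without reintroducing a fat $K_t$ minor. Even the apparently easiest new case --- pushing ``no $3$-fat $K_5$ minor'' beyond cacti --- already demands a robust coarse form of Wagner's theorem, so a complete proof appears to need substantially new tools rather than an assembly of the known special cases.
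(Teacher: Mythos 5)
This statement is \cref{weakfavorite}, which the paper states as an open \emph{conjecture} in its concluding section; the paper offers no proof of it, so there is nothing to compare your argument against except the conjecture itself. Your proposal is not a proof but a reduction followed by a research programme, and you concede as much in your final paragraph. The reductions themselves are sound: monotonicity of fat minors under subgraphs lets you replace $H$ by $K_{\abs{V(H)}}$; \cref{weakfavortie2} (together with \cref{scale} and passing to a net for length spaces, which costs only a bounded change of fatness parameter) reduces the problem to $K=3$ with $f_H$ linear in $K$; and a graph of the Robertson--Seymour almost-embeddable form does exclude some clique minor. But the statement you reduce to --- every graph with no $3$-fat $K_t$ minor is $g(t)$-quasi-isometric to a graph with no $K_{t'(t)}$ minor --- is precisely the open core of the conjecture, and the proposed route through a ``coarse Graph Minor Structure Theorem'' is not carried out at any step.

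Concretely, the gap is your ``rigidity-and-amalgamation step''. No coarse analogue of the bramble/tree-decomposition duality, of the excluded-grid theorem, or of the flat-wall machinery is established here or in the literature the paper cites; the only known instances are $K_3$ (Manning) and cacti (Fujiwara--Papasoglu), and you correctly observe that even the planar instance of the straightening step subsumes the coarse Kuratowski problem, which the paper explicitly records as open. Moreover, the paper's own counterexample (\cref{CounterExampleThm}) is a warning that coarse structure can fail to rigidify in the expected way, so the assertion that a graph with no $3$-fat $K_t$ minor ``ought to'' admit such a decomposition cannot be taken on faith from the classical theory. As written, the proposal establishes only that \cref{weakfavorite} follows from a (currently unproved, and quite strong) coarse structure theorem; it does not establish the conjecture.
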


Secondly, one might hope that Georgakopoulos and Papasoglu's conjecture is true in the special case where a 2-fat minor is forbidden.

\begin{conj}\label{weakfavorite3}
    Let $G$ be a graph, and let $\mathcal{H} = \set{H_1,\dots, H_k}$ be a collection of finite graphs. Then $G$ has no $2$-fat $H$ minor for every $H \in \mathcal{H}$ if and only if $G$ is $q$-quasi-isometric to a graph that does not contain an $H$ minor for each $H \in \mathcal{H}$, where $q$ depends on $\mathcal{H}$ only.
\end{conj}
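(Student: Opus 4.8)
We treat the two implications separately; the forward implication is the crux.

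\textbf{The backward implication.} Suppose $G$ is $q$-quasi-isometric to a graph $G'$ with no $H$ minor for each $H \in \mathcal{H}$, and suppose for a contradiction that $G$ contains a $2$-fat $H$ minor for some $H \in \mathcal{H}$, witnessed by path-connected sets $(B_v \colon v \in V(H)) \cup (P_e \colon e \in E(H))$. One first records the elementary fact that an $H$-minor-free graph has no $2$-fat $H$ minor: in a $2$-fat model the branch sets are pairwise disjoint, the paths are pairwise disjoint, and each $P_e$ meets only the two branch sets at its ends, so the $B_v$ together with suitably split end-segments of the incident paths $P_e$ form the branch sets of an ordinary $H$ minor. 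Thus $G'$ has no $2$-fat $H$ minor, and it suffices to transport the $2$-fat model from $G$ to $G'$. A naive pushforward along the quasi-isometry $\phi$ loses a multiplicative factor $q$ in all distances, so the images are only guaranteed disjoint when $q$ is below a threshold depending on $H$; the plan is to show that, for $q = q(\mathcal{H})$ chosen at this threshold, one can route the images of the paths $P_e$ through short $\widehat{G}$-geodesics (as in \cref{PathsQuasiIsometry}) and locally repair overlaps, producing an honest $H$ minor in $G'$ and the desired contradiction. The content here is a finite case analysis once the threshold relating ``$2$-fat'' to the quasi-isometry constant is pinned down; for $\mathcal{H} = \set{K_3}$ this is essentially the quantitative content of Manning's bottleneck criterion~\cite{manning2005geometry}.

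\textbf{The forward implication.} Here we must show that a graph $G$ with no $2$-fat $H$ minor for any $H \in \mathcal{H}$ is $q$-quasi-isometric to a graph with no $H$ minor for any $H \in \mathcal{H}$. This strengthens \cref{weakfavortie2} in the case $K = 2$, which only yields a graph with no $3$-fat $H$ minor, and it is exactly the gap that \cref{CounterExampleThm} shows cannot be closed in general --- but the counterexamples $G_q$ there \emph{do} contain $H$ as a $2$-fat minor, so forbidding $2$-fat minors is strictly stronger and the obstruction disappears. The natural strategy is structural: establish a coarse Graph Minor Structure Theorem, showing that a graph with no $2$-fat $H$ minor is, up to bounded-distance perturbation, built from coarse clique-sums of pieces that coarsely embed in bounded-genus surfaces with a bounded number of vortices and apices, and then reconstruct from this description a genuinely $H$-minor-free graph quasi-isometric to $G$, treating each piece (as in the bounded-treewidth model case) and assembling them along the coarse clique-sum tree with the quasi-isometry constant controlled uniformly over $\mathcal{H}$. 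Already the bounded-treewidth case seems to need new ideas: one wants that a graph of bounded coarse treewidth with no $2$-fat $H$ minor is quasi-isometric to an $H$-minor-free graph of bounded treewidth, replacing the power-graph trick of \cref{weakfavortie2} (which is too lossy here, since an ordinary $H$ minor of $G^K$ need not pull back to a $2$-fat $H$ minor of $G$) by a direct construction of the model graph.

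\textbf{Main obstacle.} The forward implication is the main obstacle, and it is genuinely open even for small $H$: a coarse Graph Minor Structure Theorem tailored to the precise threshold ``$2$'' is not available, the assembly along coarse clique-sums is delicate, and the requirement that $q$ depend only on $\mathcal{H}$ --- uniformly over the individual $H \in \mathcal{H}$ and over all input graphs $G$ --- forces quantitative control at every stage. Even granting such a structure theorem, calibrating its constants against those needed in the backward implication, so that a \emph{single} $q = q(\mathcal{H})$ serves both directions, is a further nontrivial step.
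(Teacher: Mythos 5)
The statement you have attempted is \cref{weakfavorite3}, which the paper states in its concluding section as an open conjecture; the paper contains no proof of it, and your proposal does not supply one either. The forward implication --- which, as you say, is the entire substance --- is left unproven: you defer it to a hypothetical coarse Graph Minor Structure Theorem calibrated at the exact threshold $2$, together with an unspecified assembly of pieces along coarse clique-sums with constants uniform in $\mathcal{H}$, and you acknowledge that even the bounded-treewidth base case ``seems to need new ideas'' and that the implication is ``genuinely open''. Reducing an open conjecture to a stronger unproven structural statement is a research programme, not a proof, so the central gap is simply that no argument is given where one is needed.

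The backward sketch also contains a concrete error. You propose to push a $2$-fat model of $H$ forward through the quasi-isometry and choose $q$ ``at a threshold depending on $H$'', but $q$ is not yours to choose: it is the constant of the given quasi-isometry, which the conjecture allows to be any function of $\mathcal{H}$ and hence large, while the fatness available in $G$ is only $2$. For points at distance $2$ in $G$ the quasi-isometry guarantees only $\dist(\phi(x),\phi(y)) \ge 2q^{-1} - q$, which is vacuous once $q \ge 2$, so distinct branch sets and paths can be mapped on top of one another and no ``local repair'' is forced; this loss of fatness under quasi-isometry is precisely why \cref{weakfavortie2} only recovers a $3$-fat exclusion and why \cref{CounterExampleThm} shows such losses cannot in general be undone. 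In fact, read literally the converse direction is easily violated: take $\mathcal{H} = \set{K_5}$ and let $G$ be a long path with a $3$-subdivision of $K_5$ attached at one vertex; then $G$ contains $K_5$ as a $2$-fat minor (the paper's remark that $2$-fat minors correspond to induced $3$-subdivisions applies), yet collapsing the bounded-diameter gadget gives a quasi-isometry, with constant depending only on $\mathcal{H}$, to a path, which is $K_5$-minor-free. So any serious attempt must first pin down the intended quantification of the ``only if'' direction before trying to prove it; your plan as written establishes neither implication.
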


A graph $G$ contains another graph $H$ as an \defn{induced minor} if $H$ can be obtained from $G$ by deleting vertices, contracting edges and then removing all loops and multiple edges. Note that a graph $G$ contains $H$ as a 2-fat minor if and only if $G$ contains the $3$-subdivision of $H$ as an induced minor (assuming $H$ has minimum degree at least $2$).
Thus one could phrase~\cref{weakfavorite3} in terms of excluded induced minors. A potential counterexample to the conjectured induced Menger's theorem\footnote{The induced Menger's conjecture states that for every graph $G$, for every $S, T \subseteq V(G)$ and positive integer, there exists a positive integer $r$ such that either there exist $k$ pairwise anticomplete $S$--$T$ paths in $G$, or there exists a set $Z \subseteq V(G)$ of size at most $k - 1$ such that $N^r[Z]$ intersects every $S$--$T$ path.} may be useful for disproving~\cref{weakfavorite3} by using a similar approach to our disproof of Georgakopoulos and Papasoglu's~\cite{georgakopoulos2023graph} conjecture.
We remark that some weakenings of the induced Menger's conjecture do hold~\cite{gartland2023induced,hendrey2023induced}.

\subsubsection*{Acknowledgements}
This work was completed at the 11th Annual Workshop on Geometry and Graphs held at Bellairs Research Institute in March 2024.
We are grateful to the organisers and participants for providing a stimulating research environment.

{
\fontsize{11pt}{12pt}
\selectfont
	
\hypersetup{linkcolor={red!70!black}}
\setlength{\parskip}{2pt plus 0.3ex minus 0.3ex}

\bibliographystyle{Illingworth.bst}
\newcommand{\etalchar}[1]{$^{#1}$}

\end{document}